\newtheorem{theorem}{Theorem}[section]
\newtheorem{prop}[theorem]{Proposition}
\newtheorem{lemma}[theorem]{Lemma}
\theoremstyle{remark}
\newtheorem{remark}[theorem]{Remark}
\theoremstyle{definition}
\newtheorem{definition}[theorem]{Definition}
\newtheorem{example}[theorem]{Example}
\newtheorem{problem}[theorem]{Problem}
\newtheorem*{ack}{Acknowledgements}
\newcommand{\R}{{\mathbb R}}
\newcommand{\Z}{{\mathbb Z}}
\newcommand{\Q}{{\mathbb Q}}
\newcommand{\N}{{\mathbb N}}
\newcommand{\T}{{\mathbb T}}
\newcommand{\la}{\lambda} 
\newcommand{\om}{\omega} 
\newcommand{\Mo}{\mathcal{M}_\mathbb{T}}
\newcommand{\Mt}{\widetilde{\mathcal{M}_\mathbb{T}}}
\newcommand{\Dtt}{\widetilde{\mathcal{D}_\mathbb{T}}}
\newcommand{\Dt}{\mathcal{D}_\mathbb{T}}
\DeclareMathOperator{\len}{length}
\DeclareMathOperator{\symp}{Sympl}
\DeclareMathOperator{\agl}{AGL}
\begin{document}

\title{{\bf Moduli spaces of toric manifolds}}

\author{\'A.\ Pelayo}
\address{Mathematics Department, Washington University, One Brookings Drive, St.\ Louis, MO 63130-4899, USA.}
\curraddr{School of Mathematics, Institute for Advanced Study, Einstein Drive, Princeton,  NJ 08540 USA.}
\email{apelayo@math.wustl.edu}
\thanks{AP was partly supported by NSF Grants
DMS-0965738 and DMS-0635607, an NSF CAREER Award, a Leibniz Fellowship,
Spanish Ministry of Science Grant MTM 2010-21186-C02-01, 
and by the Spanish National Research Council. }

\author{A.\ R.\ Pires}
\address{Department of Mathematics, Cornell University, 310 Malott Hall, Ithaca, NY, 14853-4201, USA.}
\email{apires@math.cornell.edu}
\thanks{ARP was partly supported by an AMS-Simons Travel Grant.}

\author{T.\ S.\ Ratiu}
\address{Section de Math\'ematiques and Bernoulli Center, Ecole Polytechnique F\'ed\'erale de Lausanne, Station 8, Lausanne, 1015, Switzerland.}
\email{tudor.ratiu@epfl.ch}
\thanks{TSR was partly supported by a MSRI membership, Swiss NSF grant 200021-140238, a visiting position at IHES, and by the government grant of the Russian Federation for support of research projects implemented by leading scientists, Lomonosov Moscow State University under the agreement No.\ 11.G34.31.0054.}

\author{S.\ Sabatini}
\address{Section de Math\'ematiques, Ecole Polytechnique F\'ed\'erale de Lausanne, Station 8, Lausanne, 1015, Switzerland.}
\email{silvia.sabatini@epfl.ch}

\date{March 26, 2013}

\begin{abstract}
We construct a distance on the moduli space of 
symplectic toric manifolds of dimension four. 
Then we study some basic topological
properties of this space, in particular, path-connectedness, 
compactness, and completeness.
The construction of the distance is related to the 
Duistermaat-Heckman measure and the Hausdorff metric.
While the moduli space,  its topology and metric, may be 
constructed in any dimension, the tools we use in the proofs 
are four-dimensional, and hence so is our main result.
\end{abstract}  

\keywords{Toric manifold \and Delzant polytope \and Moduli 
space \and Metric space}

\subjclass{MSC 53D20 \and MSC 53D05}

\maketitle

\section{Introduction} \label{sec:intro}

A \emph{toric integrable system}  
$\mu:=(\mu_1,\ldots,\mu_n) \colon M \to \mathbb{R}^n$
is an integrable system on a connected symplectic 
$2n$-dimensional manifold $(M,\omega)$ in which all the 
flows generated by the $\mu_i$, $i=1,\ldots,n$, are periodic 
of a fixed period. That is, there is a Hamiltonian action 
of a torus $\T$ of dimension $n$ on $M$ with momentum map 
$\mu$. We will assume that this action is effective and 
that $M$ is compact. In this case, the quadruple 
$(M,\omega,\mathbb{T},\mu)$ is often called a 
\emph{symplectic toric manifold} of dimension
$2n$, to emphasize the connection with toric varieties (in 
fact, all symplectic toric manifolds are toric
varieties, e.g., see Remark \ref{De} and 
\cite{De1988,DuPe2009}). 

The goal of the paper is to construct natural topologies on 
moduli spaces of compact symplectic toric $4$-manifolds under
natural equivalence relations and study some of their basic 
topological properties. 

Throughout most of this paper, we will assume that $n=2$, 
but several definitions and statements hold in more 
generality.

\subsection{Conventions} \label{conventions}

Let, throughout this paper, $\mathbb{T}:=\mathbb{T}^n$ denote 
the $n$-dimensional standard torus
$$
\mathbb{T}^n=\overbrace{\mathbb{T}^1\times \cdots \times \mathbb{T}^1}^{n\;\;\text{times}}\;,
$$
i.e., the Cartesian product of $n$ copies of the
circle $\mathbb{T}^1$ equipped with the product  operation.  
Denote by $\mathfrak{t}$ the Lie algebra $\mbox{Lie}(\T)$ 
of $\T$ and by $\mathfrak{t}^*$ the dual of $\mathfrak{t}$.  
Strictly speaking, the momentum map $\mu$ of the Hamiltonian 
action of $\mathbb{T}$ on a manifold $M$ is a map $M \to 
\mathfrak{t}^*$.  However, the presentation is simpler, if 
from the beginning we consider this map as a map 
$\mu \colon M \to \mathbb{R}^n$. How to do this is a standard, 
but not canonical, procedure. Choose an epimorphism 
$E \colon \mathbb{R} \to \mathbb{T}^1$, for instance, 
$x \mapsto e^{2\pi\, \sqrt{-1}x}$. This Lie group epimorphism 
has discrete center $\mathbb{Z}$ and the inverse of the  
corresponding Lie algebra isomorphism is given by
$\mbox{Lie}(\mathbb{T}^1) \ni \frac{\partial}{\partial x} 
\mapsto \frac{1}{2\pi}\in \mathbb{R}$. Thus, for
$\mathbb{T}^n = \left(\mathbb{T}^1\right)^n$, we get the 
non-canonical isomorphism between the corresponding
commutative Lie algebras
\[
\mbox{Lie}(\mathbb{T}^n) =
\mathfrak{t} \ni \frac{\partial}{\partial x_k} \longmapsto 
\frac{1}{2\pi} \, {\rm e}_k \in\mathbb{R}^n,
\]  
where ${\rm e}_k$ is the $k^{\rm th}$ element in the canonical basis
of $\mathbb{R}^n$. Choosing an inner product 
$\langle \cdot,\cdot \rangle$ on $\mathfrak{t}$, we obtain an 
isomorphism $\mathfrak{t} \to \mathfrak{t}^*$, and hence 
taking its inverse and composing it with the isomorphism
$\mathfrak{t} \to \mathbb{R}^n$ described above, we get an 
isomorphism $\mathcal{I}: \mathfrak{t}^* \to \mathbb{R}^n$. 
In this way, we obtain a momentum map $\mu=\mu_{\mathcal{I}} \colon M \to \mathbb{R}^n$.

If $(M, \omega)$ is a symplectic manifold, denote by 
$\operatorname{Sympl}(M)$ the group of symplectic diffeomorphisms of $M$.

\subsection{The moduli space $\Mo$}

With the conventions in Section \ref{conventions}, where 
$\T$ and the identification $\mathcal{I} \colon
\mathfrak{t}^* \to \mathbb{R}^n$ are \emph{fixed}, we next 
define the moduli space of toric manifolds.    
Let $(M,\omega,\T,\mu \colon M \to \mathbb{R}^n)$ and 
$(M',\omega',\T,\mu' \colon M \to \mathbb{R}^n)$ be symplectic 
toric manifolds, with effective symplectic
actions $\rho\colon \T\to \symp (M,\omega)$ and 
$\rho'\colon \T\to \symp (M',\omega')$. 
These two
symplectic toric manifolds are \emph{isomorphic} if 
there exists an equivariant symplectomorphism  
$\varphi\colon M\to M'$ (i.e., $\varphi$ is a diffeomorphism 
satisfying $\varphi^\ast \omega' = \omega$ which intertwines the $\T$ actions) such that 
$\mu'\circ \varphi=\mu$ (see also \cite[Definition I.1.16]{AuCaLe}). We denote by $\Mo:=\Mo^{\mathcal{I}}$ 
the \emph{moduli space} (the set of equivalence classes) of 
$2n$-dimensional symplectic toric manifolds under this 
equivalence relation. 
The motivation for introducing this moduli space
comes from the following seminal result, due to Delzant \cite{De1988}.

\begin{theorem}[{\cite[Theorem 2.1]{De1988}}]\label{Del1}
Let $(M,\omega,\T,\mu)$ and $(M',\omega',\T,\mu' )$ be two toric symplectic
manifolds. If $\mu(M)=\mu'(M')$ then there exists an equivariant
symplectomorphism $\varphi\colon (M,\omega)\to (M',\omega')$ such that the following
diagram
\begin{equation*}
    \xymatrix{ (M,\om) \ar[d]_-{\mu} \ar[r]^-{\varphi} & (M',\om') \ar[d]^-{\mu'} \\
      \mu(M) \ar[r]^-{\text{Id}} & \mu'(M')}
  \end{equation*}
commutes.
\end{theorem}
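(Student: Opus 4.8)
This is the uniqueness half of Delzant's classification, so the plan is to reduce both manifolds to a common explicit model and compare. Write $\Delta:=\mu(M)=\mu'(M')\subset\mathbb{R}^n$. By the Atiyah--Guillemin--Sternberg convexity theorem together with the analysis of the local structure of $\mu$ at a fixed point, $\Delta$ is a \emph{Delzant polytope}: it is simple, rational, and smooth, i.e.\ at each vertex the primitive inward normals of the incident facets form a $\mathbb{Z}$-basis of the standard lattice $\mathbb{Z}^n$ (under the fixed identification $\mathcal{I}$). Letting $d$ be the number of facets of $\Delta$ and $N:=\ker(\mathbb{T}^d\to\mathbb{T}^n)$ the subtorus determined by the facet normals, symplectic reduction of $\mathbb{C}^d$ by $N$ at the level dictated by $\Delta$ produces the \emph{Delzant model} $X_\Delta$, a symplectic toric manifold whose momentum map $\mu_{X_\Delta}$ has image exactly $\Delta$. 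It therefore suffices to show: every symplectic toric manifold $(M,\om,\T,\mu)$ with $\mu(M)=\Delta$ admits an equivariant symplectomorphism $\Psi_M\colon M\to X_\Delta$ with $\mu_{X_\Delta}\circ\Psi_M=\mu$; applying this to $M$ and to $M'$ and setting $\varphi:=\Psi_{M'}^{-1}\circ\Psi_M$ proves the theorem. (One could also interpolate between $M$ and $M'$ directly, but routing through $X_\Delta$ keeps the model explicit.)

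\textbf{The open dense part.} Over the interior $\mathring\Delta$ the $\T$-action is free and proper, so $\mu^{-1}(\mathring\Delta)\to\mathring\Delta$ is a principal $\T$-bundle; since $\mathring\Delta$ is convex, hence contractible, the bundle is trivial, and a choice of section yields an equivariant diffeomorphism $\mu^{-1}(\mathring\Delta)\cong\mathring\Delta\times\T$ carrying $\mu$ to the projection. The $\T$-orbits are isotropic (commuting Hamiltonians) and half-dimensional, hence Lagrangian, so in these action--angle coordinates $\om$ is a closed invariant two-form restricting to $0$ on each $\{\lambda\}\times\T$; because $H^2(\mathring\Delta)=0$ and because $\mathrm d\lambda_k=\iota_{\partial_{\theta_k}}\om$ is forced by the momentum map condition, a standard computation shows $\om=\sum_k\mathrm d\lambda_k\wedge\mathrm d\theta_k$ after a further $\lambda$-dependent translation in the $\T$-direction. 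The same holds for $X_\Delta$ over $\mathring\Delta$, so these two identifications compose to an equivariant symplectomorphism $\Psi_0\colon\mu^{-1}(\mathring\Delta)\to\mu_{X_\Delta}^{-1}(\mathring\Delta)$ with $\mu_{X_\Delta}\circ\Psi_0=\mu$.

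\textbf{Near the boundary strata.} For a proper face $F$ of $\Delta$ and an orbit $\mathcal{O}\subset\mu^{-1}(F)$, the equivariant symplectic normal form theorem for Hamiltonian torus actions (Marle, Guillemin--Sternberg) describes a $\T$-invariant neighborhood of $\mathcal{O}$, up to equivariant symplectomorphism intertwining the momentum maps, entirely in terms of the symplectic slice representation of the stabilizer — which in turn is encoded by the primitive inward normals of the facets containing $F$. By construction these combinatorial data agree for $M$ and for $X_\Delta$, so near each $\mu^{-1}(F)$ there is a local equivariant symplectomorphism onto the corresponding piece of $X_\Delta$, compatible with the momentum maps, and agreeing with $\Psi_0$ over points mapping into $\mathring\Delta$.

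\textbf{Gluing — the main obstacle.} The remaining work, and the genuinely delicate step, is to merge $\Psi_0$ and the boundary charts into a single global equivariant symplectomorphism still intertwining the momentum maps. I would first patch the underlying equivariant diffeomorphisms, via a $\T$-invariant partition of unity subordinate to the face stratification, into an equivariant diffeomorphism $\Phi\colon M\to X_\Delta$ with $\mu_{X_\Delta}\circ\Phi=\mu$ that is already symplectic on a neighborhood of $\mu^{-1}(\partial\Delta)$; then correct $\Phi$ by an equivariant Moser argument on $\om_t:=(1-t)\,\om+t\,\Phi^\ast\om_{X_\Delta}$. Both endpoints have momentum map $\mu$, so $\iota_{\xi_M}\om_t=\mathrm d\langle\mu,\xi\rangle$ for all $t$; one checks $\om_t$ stays nondegenerate, produces a $\T$-invariant primitive $\sigma$ of $\om-\Phi^\ast\om_{X_\Delta}$ supported in a compact subset of $\mu^{-1}(\mathring\Delta)$, and integrates the invariant vector field $X_t$ with $\iota_{X_t}\om_t=-\sigma$, whose flow is then an equivariant isotopy tangent to the fibres of $\mu$ (so $\mu$ is preserved) carrying $\om$ to $\Phi^\ast\om_{X_\Delta}$; composing with $\Phi$ gives $\Psi_M$. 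The points where real care is needed are (i) the cohomological bookkeeping — showing that $\om$ and $\Phi^\ast\om_{X_\Delta}$ differ by an \emph{exact}, $\T$-invariant, fibre-horizontal two-form with compactly supported primitive, so that $X_t$ is simultaneously invariant and tangent to the $\mu$-levels — and (ii) arranging the normal-form charts along faces of different dimensions to overlap consistently before patching. I expect essentially all the difficulty of the theorem to be concentrated in this gluing step.
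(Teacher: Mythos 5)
The paper itself does not prove this statement: Theorem~\ref{Del1} is quoted as \cite[Theorem 2.1]{De1988} and used as a black box, so there is no ``paper's own proof'' to compare against. Your sketch is therefore best measured against Delzant's actual argument, and there it diverges in a way that introduces a genuine gap.

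Your outline correctly identifies the two local inputs --- global action--angle coordinates over $\mathring\Delta$ (free proper $\T$-action over a contractible base) and the Marle--Guillemin--Sternberg normal form near $\mu^{-1}(F)$ for each proper face $F$ --- and correctly flags the gluing as the crux. But the gluing mechanism you propose is not the one that works cleanly, and as written it has a hole. You say you would ``patch the underlying equivariant diffeomorphisms via a $\T$-invariant partition of unity subordinate to the face stratification.'' One cannot patch diffeomorphisms with a partition of unity: the convex combination of two local equivariant diffeomorphisms $M\to X_\Delta$ has no meaning, and there is no reason the result is a diffeomorphism. Moreover, the boundary normal-form charts will \emph{not} automatically ``agree with $\Psi_0$ over points mapping into $\mathring\Delta$''; on each overlap the two identifications differ by a fibre-preserving automorphism of the principal $\T$-bundle over a subset of $\mathring\Delta$, i.e.\ by translation along the fibres by a closed $1$-form on the base (locally $\mathrm{d}f$ for a smooth $f$). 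This is precisely the datum Delzant controls. His uniqueness proof is \v{C}ech-theoretic: the transition data between local momentum-map-preserving equivariant symplectomorphisms form a $1$-cocycle with values in the sheaf of $\T$-invariant fibre-translation symmetries (equivalently, a sheaf of smooth functions on $\Delta$ modulo locally constant ones), and $H^1$ of this sheaf on the contractible $\Delta$ vanishes, so the cocycle is a coboundary and the local pieces glue. No Moser trick is needed.

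Your Moser fallback could in principle be made to work, but it requires more than you supply. The form $\beta:=\om-\Phi^\ast\om_{X_\Delta}$ is indeed closed, $\T$-invariant, and horizontal (both sides satisfy $\iota_{\xi_M}(\cdot)=\mathrm{d}\langle\mu,\xi\rangle$), so it is basic and descends to a closed compactly supported $2$-form $\bar\beta$ on $\mathring\Delta$. To get an invariant horizontal primitive with compact support one needs $[\bar\beta]=0$ in $H^2_c(\mathring\Delta)$. For $n\geq 3$ this group vanishes, but for the $n=2$ case the paper cares about, $H^2_c(\mathring\Delta)\cong\mathbb{R}$, so there is a real scalar obstruction; it does vanish (one can see it from the Duistermaat--Heckman volume identity $\int_M\om^2=\int_M(\Phi^\ast\om_{X_\Delta})^2$), but you must prove it rather than assert ``a standard computation shows'' it. Finally, your point (ii) --- making the face charts mutually consistent before any gluing --- is also a genuine issue, and the \v{C}ech formulation is exactly the bookkeeping device that resolves it. In short: the skeleton is right, but replace the partition-of-unity-on-diffeomorphisms step with the sheaf-cohomological patching of fibre translations, and if you insist on Moser, supply the compactly supported cohomology computation in dimension four.
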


The convexity theorem of Atiyah \cite{At1982} and Guillemin\--Sternberg \cite{GuSt1982} asserts that
the image of the momentum map is a convex polytope. In addition, if the action is toric (the acting torus is precisely half the dimension of the manifold) the momentum image is a Delzant polytope (see Section \ref{sec:dp}). 
Let $\mathcal{D}_\mathbb{T}$ denote the set of Delzant
polytopes. 
%As explained in Section \ref{stm}, Delzant proves that the map
As a consequence of Theorem \ref{Del1}, the following map
\begin{equation}
\label{map}
\left[ (M,\omega,\T,\mu) \right] \ni \mathcal{M}_\mathbb{T}
\longmapsto \mu(M) \in \mathcal{D}_\mathbb{T},
\end{equation}
is an injection. However, Delzant also shows how from a Delzant
polytope it is possible to reconstruct a symplectic toric manifold, thus implying that
\eqref{map} is a \emph{bijection}.

To simplify notations, we usually write 
$(M,\omega,\T,\mu)$ identifying the representative with
its equivalence class $\left[(M,\omega,\T,\mu) \right]$
in $\mathcal{M}_\mathbb{T}$.

\begin{remark}
If we choose a different identification 
$\mathcal{I}' \colon \mathfrak{t}^* \to \mathbb{R}^n$ in Section \ref{conventions},
then the resulting moduli space $\Mo^{\mathcal{I}'}$ is, in general, a set \emph{different} from $\mathcal{M}_{\T}$. 
However, $\Mo$ and $\Mo^{\mathcal{I}'}$ are in bijective correspondence 
by a map which preserves the structures that this paper deals with (Section \ref{top}). 
An alternative and equivalent approach to this convention is to
define $\mathcal{M}_{\mathbb{T}}$ to be a space of pairs, where the second element
of the pair involves the Lie algebra identification.  We shall use the first convention throughout the
paper (see Section \ref{stm}).

\end{remark}

\begin{remark}
Note that for every non-zero $c\in \R^n$ the equivalence
class of $(M,\omega,\T,\mu)$ is different from that of $(M,\omega,\T,\mu+c)$.
We distinguish these two spaces since for general Hamiltonian $G$-actions the constant
$c$ is an important element of $\mathfrak{g}^*$. 
Indeed, given a connected Lie group $G$ and a Hamiltonian
$G$-space $(M,\omega,G)$ with moment map $\mu\colon M\to \mathfrak{g}$,
if $\mu'\colon M\to \mathfrak{g}$ is a different choice of moment map for the $G$-action then
$\mu-\mu'=c\in[\mathfrak{g},\mathfrak{g}]^0\subset \mathfrak{g}^*$, the annihilator of the commutator
ideal of $\mathfrak{g}$, which coincides with $H^1(\mathfrak{g};\R)$, the first
Lie algebra cohomology group (see for example \cite[$\S 26.2$]{anacannas}).  
\end{remark}

\subsection{The moduli space $\Mt$}

Following \cite{KKP}, we say that two symplectic toric 
manifolds $(M,\omega,\T,\mu)$ and $(M',\omega',\T,\mu')$ are 
\emph{weakly isomorphic} if there exists an automorphism of 
the torus $h\colon \T\to \T$ and an $h$-equivariant 
symplectomorphism $\varphi\colon M\to M'$, i.e.,  the 
following diagram commutes:
\begin{equation}\label{comm action1}
\xymatrix{
\T\times M \;\; \ar[r]^{\;\;\rho^*} \ar[d]_{(h,\varphi)} & M 
\ar[d]_{\varphi} \\
\T \times M' \;\;\ar[r]^{\;\;\rho'^*} &  M',
 } 
\end{equation}
where $\rho^*(x,m): = \rho(x)(m)$ for all $x \in\mathbb{T}$,
$m \in M$, and similarly for $\rho'^*$.

We denote by $\Mt$ the \emph{moduli space of weakly 
isomorphic $2n$-dimensional symplectic toric manifolds} 
(see Section \ref{stm} for more details).

Two weakly isomorphic toric manifolds $(M,\omega,\T,\mu)$ and 
$(M',\omega',\T,\mu')$ are isomorphic if and only if $h$ in 
(\ref{comm action1}) is the identity and $\mu'=\mu \circ 
\varphi$.

\subsection{Topologies and metrics} \label{top}

We consider the space of Delzant polytopes $\Dt$ and turn it 
into a metric space by endowing it with the distance function 
given by the volume of the symmetric difference 
$$
(\Delta_1\smallsetminus \Delta_2) \cup 
(\Delta_2\smallsetminus \Delta_1)
$$ 
of  any two polytopes.

The map (\ref{map}) allows us to define a metric $d_\T$ on 
$\Mo$ as the pullback of the metric defined on $\Dt$, thereby 
getting the metric space $(\Mo,d_\T)$.  \emph{This metric 
induces a topology $\nu$ on $\Mo$} (so, by definition,  it
follows that $(\Mo,\nu)$ is a metrizable topological space).

Let $\agl(n,\Z):=\operatorname{GL}(n,\Z)\ltimes \R^n$ be the 
group of affine transformations of $\R^n$ given by 
$$
\mathbb{R}^n\ni x\mapsto Ax+c \in\mathbb{R}^n,
$$ 
where $A\in \operatorname{GL}(n,\Z)$ and $c\in \R^n$.
We say that two Delzant polytopes $\Delta_1$ and $\Delta_2$
are \emph{$\agl(n,\Z)$-equivalent} if there exists 
$\alpha\in \agl(n,\Z)$ such that $\alpha(\Delta_1)=\Delta_2$.
Let $\Dtt$ be the moduli space of Delzant polytopes relative
to $\agl(n,\Z)$-equivalence; we endow this space with the 
quotient topology induced by the projection map
$$
\pi\colon \Dt \to \Dtt\simeq \Dt/\agl(n,\Z).
$$

As we shall see in Section \ref{stm}, there exists a bijection 
$\Psi:\Mt\rightarrow \Dtt$ (in fact, it is induced by 
(\ref{map})); thus $\Mt$ is also a topological space, with 
topology $\widetilde{\nu}$ induced by $\Psi$. \emph{We denote 
this topological space by $(\Mt,\widetilde{\nu})$.}

\subsection{Main Theorem}

Let $\mathfrak{B}(\R^n)$ be the $\sigma$-algebra of Borel sets 
of $\R^n$, the map 
$$\lambda\colon\mathfrak{B}(\R^n)\to\R_{\geq 0} 
\cup \{\infty\}$$ 
be the Lebesgue measure on $\R^n$, and 
$\mathfrak{B}'(\R^n)\subset \mathfrak{B}(\R^n)$
the subset of Borel sets with finite Lebesgue measure. Define  
\begin{eqnarray} \label{first}
 d(A,B):=\left\|\chi_A-\chi_B\right\|_{{\rm L}^1},
\end{eqnarray}
where $\chi_C\colon \R^n\to \R$ denotes the characteristic function of $C\in \mathfrak{B}'(\R^n)$. 
This extends the distance function defined above on $\Dt$, but it is \emph{not} a metric on $\mathfrak{B}'(\R^n)$. 
Identifying the sets $A, B \in \mathfrak{B}'(\R^n)$ for which
$d(A,B)=0$, we obtain a metric on the resulting quotient space of $\mathfrak{B}'(\R^n)$ (see Section
\ref{subsection:metric} for details). 

Let $\mathcal{C}$ be the space of convex compact subsets of 
$\R^2$ with positive Lebesgue measure,
$\varnothing$ the empty set, and
$$\hat{\mathcal{C}}:=\mathcal{C}\cup\{\varnothing\}.$$ 
Then $\hat{\mathcal{C}}$ equipped with the distance function 
$d$ in (\ref{first}) is a metric space. 
 
 We prove the following theorem.

\begin{theorem} \label{key}
Let $\Mo$ and $\Mt$ be the moduli spaces of toric 
four-dimensional manifolds, under isomorphisms and 
equivariant isomorphisms, respectively.  Then:
\begin{itemize}
\item[{\rm (a)}] $(\Mt,\widetilde{\nu})$  is path-connected; 
\item[{\rm (b)}] $(\Mo,d_\T)$ is neither locally compact nor 
a complete metric space. Its completion can be identified with 
the metric space $(\hat{\mathcal{C}},d)$ in the following 
sense: identifying $(\Mo,d_\T)$ with $(\Dt,d)$ via 
{\rm (\ref{map})}, the completion of  $(\mathcal{D}_{\T},d)$ 
is $(\hat{\mathcal{C}},d)$.  
\end{itemize}
\end{theorem}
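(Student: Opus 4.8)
The plan is to treat the two assertions in Theorem \ref{key} separately, since (a) is a statement about the weak moduli space and the combinatorics of Delzant polytopes, while (b) is a metric-completion statement about the isomorphism moduli space.

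For part (a), I would use the identification $\Psi\colon \Mt \to \Dtt \simeq \Dt/\agl(2,\Z)$ and prove path-connectedness at the level of $\Dtt$ with the quotient topology. The strategy is: first observe that the path-connectedness of $\Dt$ (with the symmetric-difference metric) would descend to $\Dtt$ via the continuous projection $\pi$, so it suffices to connect any two Delzant polytopes by a continuous path inside $\Dt$, or, more cheaply, to connect their $\agl(2,\Z)$-classes. Concretely I would first show that every Delzant polygon can be joined to a standard model (e.g.\ a large standard simplex or a square) by a path: perform the continuous ``corner chopping''/``edge sliding'' operations that relate Delzant polygons, reversing a sequence of blow-downs to reach $\mathbb{CP}^2$ or $\mathbb{CP}^1\times\mathbb{CP}^1$, each elementary operation being realized by a one-parameter family of Delzant polygons depending continuously (in the $d$-metric) on the parameter; then connect the two standard models to each other (for instance, $\mathbb{CP}^1\times\mathbb{CP}^1$ degenerates to $\mathbb{CP}^2$ by collapsing an edge, which is continuous in $d$ even though it changes the diffeotype — this is fine because we only need topological path-connectedness of the moduli space, not a path of fixed diffeomorphism type). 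Care is needed that the family stays Delzant except possibly at endpoints, and that scaling (which lives in $\Dt$ but not in $\agl(2,\Z)$) is used to match volumes; composing these paths and pushing forward by $\pi\circ(\text{map }\eqref{map})^{-1}$ gives the result.

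For part (b), the non-local-compactness and non-completeness of $(\Mo,d_\T)\cong(\Dt,d)$ come from exhibiting explicit Cauchy sequences of Delzant polygons converging in $d$ to a convex body that is not a polygon (e.g.\ polygons with more and more edges approximating a disc), and from exhibiting, near any point, a sequence with no convergent subsequence in $\Dt$ (the same kind of example, localized). The heart of (b) is identifying the completion with $(\hat{\mathcal{C}},d)$. For this I would: (i) check $(\hat{\mathcal{C}},d)$ is a metric space — $d$ is clearly a pseudometric on $\mathfrak{B}'(\R^2)$, and the point is that two convex compact sets of positive measure with $d=0$ are equal (their symmetric difference is Lebesgue-null, and a convex body equals the closure of its interior, so a null symmetric difference forces equality; the empty set is isolated in the relevant sense since any nonempty convex compact set of positive measure is at positive $d$-distance from $\varnothing$); (ii) show $\Dt$ is $d$-dense in $\hat{\mathcal{C}}$: given a convex compact $K$ of positive area, inscribe Delzant polygons (rational convex polygons can be made Delzant after an $\agl$ adjustment/refinement near vertices, or one approximates by smooth Delzant polygons from inside and outside) whose symmetric difference with $K$ has area $\to 0$, and $\varnothing$ is already in the closure as the $d$-limit of shrinking polygons; (iii) show $(\hat{\mathcal{C}},d)$ is complete: a $d$-Cauchy sequence of nonempty convex compact sets is also Cauchy in the Hausdorff metric on a fixed large ball (using the dimension-two fact that for convex bodies the symmetric-difference area controls, and is controlled by, the Hausdorff distance up to the inradius/diameter), so by Blaschke's selection theorem it has a Hausdorff limit $K$, which is convex compact; if $\mathrm{area}(K)>0$ then $K\in\mathcal{C}$ and $d$-convergence follows, while if $\mathrm{area}(K)=0$ the sequence $d$-converges to $\varnothing\in\hat{\mathcal{C}}$. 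Together, (i)--(iii) say $(\hat{\mathcal{C}},d)$ is a complete metric space containing $(\Dt,d)$ as a dense subspace, hence is its completion.

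I expect the main obstacle to be step (b)(iii), the completeness of $(\hat{\mathcal{C}},d)$ — specifically the quantitative comparison in dimension two between the $\mathrm{L}^1$/symmetric-difference distance and the Hausdorff distance for convex bodies, which is exactly where the hypothesis $n=2$ and convexity are used (in higher dimensions, or without convexity, $d$-Cauchy does not imply Hausdorff-Cauchy, and the limit need not be convex or even a nice set). A secondary technical point is the density step (b)(ii): showing that genuinely \emph{Delzant} polygons (not just rational ones) are dense, which requires the local normal-form/corner-refinement argument to turn an arbitrary rational approximant into a Delzant one without moving it much in $d$. The non-completeness and non-local-compactness claims themselves, and the verification that $d$ restricted to $\hat{\mathcal C}$ separates points, are comparatively routine once the convexity lemma in (i) is in hand.
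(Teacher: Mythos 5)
Your overall strategy is sound and, for part (a) and the density/non-compactness claims in (b), it is essentially the same as the paper's: the paper invokes the recursive classification of Delzant polygons (its Lemma~\ref{fulton}, from \cite{KKP,Fu1993}) to connect every polygon to the Hirzebruch/simplex models by continuous families of corner choppings and edge slidings, which is precisely your ``reverse the blow-downs to a minimal model'' picture; and the density chain $\Dt\subset\mathcal{P}_\Q\subset\mathcal{P}_2\subset\mathcal{C}$ is established exactly as you sketch, with the step $\mathcal{P}_\Q\to\Dt$ handled by a concrete vertex-resolution procedure (the paper's Lemma~\ref{lemma:resolving}) rather than any ``$\agl$ adjustment'' --- note that $\agl(2,\Z)$-transformations preserve the Delzant property, so they cannot by themselves make a rational polygon Delzant; the content is the added edges near each singular vertex.

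Where you genuinely diverge from the paper is the proof that $(\hat{\mathcal{C}},d)$ is complete. You propose to pass from $d$-Cauchy to Hausdorff-Cauchy (using a quantitative $d$-vs-$d_{\rm H}$ comparison for convex bodies \`a la Shephard--Webster, together with a dichotomy on whether areas stay bounded away from zero), and then apply Blaschke's selection theorem. The paper instead argues purely measure-theoretically: a $d$-Cauchy sequence $\{\chi_{A_n}\}$ is Cauchy in ${\rm L}^1(\R^2)$, so has an ${\rm L}^1$-limit $f$; passing to an a.e.\ convergent subsequence gives a candidate limit set $A=\{f=1\}$, and the convexity of each $A_n$ forces $A$ to coincide a.e.\ with its convex hull; boundedness then follows from the elementary observation (Lemma~\ref{convex}) that an unbounded convex set of positive measure has infinite measure, and the boundary of a convex set is Lebesgue-null. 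The paper's route is more self-contained, avoids Blaschke and any quantitative metric comparison, and in fact works verbatim in any dimension --- so, contrary to what you flag, the four-dimensionality of the theorem is not used in the completeness step at all, but rather in the classification Lemma~\ref{fulton} (part (a)) and in the explicit resolution-of-singularities Lemma~\ref{lemma:resolving} (density of $\Dt$ in $\mathcal{P}_\Q$). Your Blaschke approach is workable, but you would still owe the reader a proof that a $d$-Cauchy sequence of convex bodies with areas bounded below is eventually uniformly bounded and uniformly ``fat'' (so that the Shephard--Webster comparison is uniform along the sequence); the paper's ${\rm L}^1$ argument sidesteps all of this.
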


\begin{remark}
Metric spaces are Tychonoff (that is, completely regular
and Hausdorff), therefore $\Mo$ is Tychonoff. The 
Stone-\v{C}ech compactification \cite{St1937,Ce1937}  
can be applied to Tychonoff spaces. The Stone-\v{C}ech 
compactification, in general, gives rise to a compactified 
space  which is Hausdorff and normal. Hence $\Mo$ admits a 
Hausdorff compactification.
\end{remark}

\begin{remark}
Theorem \ref{key} positively answers  the case $2n=4$ of 
Problem 2.42 in \cite{PeVN2012}. We do not know if the 
analogous statement to Theorem \ref{key} holds in dimensions 
greater than or equal to six. Note that the constructions of 
the moduli spaces $\Mo$ and $\Mt$  do not depend on dimension.
\end{remark}

\vspace{3mm}

\subsection*{Structure of the paper}
In Section \ref{sec:dp} we introduce the topological spaces 
we are going to work with, involving Delzant polytopes and 
symplectic toric manifolds, under certain 
equivalence relations. The ingredient that allows us to relate 
these two categories of spaces is the 
Delzant classification theorem (Theorem \ref{theo:delzant}).

Section \ref{sec:conn} starts with a detailed analysis
of how to construct Delzant polygons (i.e., polytopes 
of dimension $2$) following a simple recursive procedure 
presented in \cite{KKP}. This recipe is a main technical tool 
for the present paper; no such method is known for polytopes 
of dimension greater than or equal to three. The remainder 
of this section and Section \ref{sec:compl} are devoted to 
proving the path-connectedness and metric properties of the
space of Delzant polygons  (or rather, a natural quotient of 
it); the main theorem of the paper is implied by the results 
proven in these sections. 

Finally, in Section \ref{sec:problems}, several open problems 
are presented.  Appendix \ref{sec:appendix} contains
a brief review of the polytope terms and results we use in 
the paper.

\begin{ack}
We would like to thank the anonymous referee who made many useful comments
and clarifications which have significantly improved an earlier version of the paper.
AP is grateful to Helmut Hofer for discussions and support. He also thanks Isabella Novik for 
discussions concerning general polytope theory, and Problem \ref{p30}, during a visit to 
the University of Washington in 2010.  The authors are also grateful to Victor Guillemin and
Allen Knutson for helpful advice. 
\end{ack}

\section{Delzant polytopes and toric  manifolds} 
\label{sec:dp}

\subsection{A metric on the space of Delzant polytopes 
$\mathcal{D}_\T$}
\label{subsection:metric}

In this paper we are interested only in convex full 
dimensional polytopes, which we will simply call polytopes.
We refer to Appendix \ref{sec:appendix} for the basic 
terminology and results on polytopes.

\begin{definition}
\label{del 2} 
(following \cite{anacannas})
A convex polytope $\Delta$ in $\R^n$ is a \emph{Delzant 
polytope} if it is simple, rational and smooth:
\begin{itemize}
\item[{\rm (i)}] $\Delta$ is \emph{simple} if there are 
exactly $n$ edges meeting at each vertex $v\in V$;
\item[{\rm (ii)}] $\Delta$ is \emph{rational} if for every 
vertex $v\in V$, the edges meeting at $v$ are of the form 
$v+t u_i$, where $t\geq 0$ and $u_i\in \Z^n$;
\item[{\rm (iii)}] A vertex $v\in V$ is \emph{smooth} if the 
edges meeting at $v$ are of the form $v+tu_i$, $t\geq 0$, 
where the vectors $u_1,\ldots,u_n$ can be chosen to be a 
$\Z$ basis of $\Z^n$. $\Delta$ is \emph{smooth} if every 
vertex $v\in V$ is smooth.
\end{itemize}
\end{definition}

Let $\mathcal{D}_\T$ denote the \emph{space of Delzant 
polytopes in $\mathbb{R}^n$}, where $n=\dim \T$. 
We construct a topology on $\mathcal{D}_\T$, coming from a 
metric. 

Recall that the \emph{symmetric difference} of two subsets 
$A,B \subset \mathbb{R}^n$ is
$$
A\Delta B:= (A\smallsetminus B) \cup (B\smallsetminus A).
$$ 

Let $\mathfrak{B}(\R^n)$ be the $\sigma$-algebra of Borel 
sets of $\R^n$, and let $\lambda\colon \mathfrak{B}(\R^n)
\to \R_{\geq 0}\cup \{\infty\}$ be the Lebesgue measure
on $\R^n$. 

\begin{definition}
Let $\mathfrak{B}'(\R^n)\subset \mathfrak{B}(\R^n)$
be the Borel sets with finite Lebesgue measure. Define 
$$
d\colon \mathfrak{B}'(\R^n)\times \mathfrak{B}'(\R^n)\to 
\R_{\geq 0}
$$ 
by
\begin{equation}\label{distance L1}
d(A,B):=\lambda(A\Delta B)=
\int_{\R^n} \left|\chi_A-\chi_B\right|\,{\rm d}\lambda 
= \left\|\chi_A-\chi_B\right\|_{{\rm L}^1},
\end{equation}
where $\chi_C\colon \R^n\to \R$ denotes the characteristic 
function of $C\in \mathfrak{B}'(\R^n)$. 
\end{definition}

Note that $d$ is symmetric and satisfies the triangle 
inequality, since 
$$
A\Delta C\subset (A\Delta B) \cup (B\Delta C).
$$ 
However, in this space,  $d(A,B)=0$ does not necessarily imply
that $A=B$. We introduce in $\mathfrak{B}'(\R^n)$ the equivalence relation $\sim$,
where 
$$
A\sim B\quad\text{if and only if}\quad\lambda(A\Delta B)=0.
$$ 
Then the induced map, also denoted by $d\colon 
(\mathfrak{B}'(\R^n)/\sim)\times (\mathfrak{B}'(\R^n)/\sim)
\to \R_{\geq 0}$, is a metric (associated to the ${\rm L}^1$ 
norm).

Since $A,B\in \mathcal{D}_\T\subset \mathfrak{B}'(\R^n)$ and 
$A\sim B$ implies $A=B$, it follows that 
$$
(\mathcal{D}_\T/\sim)\,=\mathcal{D}_\T
$$ 
and thus the restriction of $d$ to $\mathcal{D}_\T$ is a 
metric. Hence $(\mathcal{D}_\T,d)$ is a metric space, endowed 
with the topology induced by $d$.

\subsection{Symplectic toric manifolds}\label{stm}

We review below the ingredients from the theory of symplectic 
toric manifolds which we need for this paper, in particular 
the Delzant classification theorem. We follow the conventions 
in Section \ref{conventions}.

A \emph{symplectic manifold} $(M,\omega)$ is a pair 
consisting of a smooth manifold $M$ and a \emph{symplectic 
form} $\omega$, i.e., a non-degenerate closed 2-form on $M$. 
Suppose that the $n$-dimensional torus $\T$ acts on 
$(M,\, \omega)$  symplectically (i.e., by diffeomorphisms 
which preserve the symplectic form). The action 
$\T \times M \to M$ of $\T$ on $M$ is denoted by $(t,m) 
\mapsto t \cdot m$.

A vector $X$ in the Lie algebra $\mathfrak{t}$ generates a 
smooth vector field $X_M$ on $M$, called the 
\emph{infinitesimal generator}, defined
by 
$$
X_M(m):= \left.\frac{{\rm d}}{{\rm d}t}\right|_{t=0}
{\rm exp}(tX)\cdot m,
$$ 
where ${\rm exp} \colon \mathfrak{t} \to \T$ is the 
exponential map of Lie theory and $m \in M$. We write the contraction $1$-form as
$\iota_{X_M} \omega : = \omega(X_M, \cdot) \in \Omega^1(M)$. 

Let $\left\langle \cdot , \cdot
\right\rangle : \mathfrak{t}^\ast \times \mathfrak{t}
\rightarrow \mathbb{R}$ be the duality pairing.
The  $\T$-action on $(M,\omega)$  is said to be 
\emph{Hamiltonian} if there exists a smooth
$\mathbb{T}$-invariant map $\mu\colon M \to \mathfrak{t}^*$, 
called the \emph{momentum map}, such that for
all $X \in \mathfrak{t}$ we have 
\begin{eqnarray}
\iota_{X_M}
\omega ={\rm d} \langle \mu,
X \rangle. \label{oo}
\end{eqnarray}

As defined in Section \ref{sec:intro}, a \emph{symplectic 
toric manifold} $(M, \om, \T, \mu)$ is a symplectic compact 
connected manifold $(M, \om)$ of dimension $2n$ endowed with 
an effective (i.e., the intersection of all isotropy
subgroups is the identity) Hamiltonian action of an $n$-
dimensional torus $\T$ admitting a momentum map $\mu: M \to 
\mathfrak{t}^*$.  With the conventions of
Section \ref{conventions}, the map $\mu: M \to \mathfrak{t}^*$ 
gives rise (in a non\--canonical way) to a map
$M \to\mathfrak{t}^* \to \mathbb{R}^n$ which, for simplicity, 
is also denoted by $\mu \colon M \to \mathbb{R}^n$.

\begin{definition} \label{def:iso}
Let $(M,\omega,\T,\mu)$ and $(M',\omega',\T,\mu')$ be 
symplectic toric manifolds, with effective symplectic 
actions $\rho\colon \T\to \symp (M,\omega)$ and $\rho'\colon 
\T\to \symp (M',\omega')$. We say that $(M,\omega,\T,\mu)$ 
and $(M',\omega',\T,\mu')$ are \emph{isomorphic}\footnote{In 
the literature, these manifolds are usually called 
\emph{equivariantly symplectomorphic}. However, the same name 
is sometimes also used for the notion in Definition 
\ref{weaker}, and so we use different names to distinguish the 
two.} if there exists an equivariant symplectomorphism 
$\varphi\colon M\to M'$ such that $\mu'\circ \varphi=\mu$.

We denote by $\Mo$ the moduli space of $2n$-dimensional 
isomorphic toric manifolds.
\end{definition}

The following is an influential theorem by T. Delzant 
(\cite{De1988}).

\begin{theorem}[Delzant's Theorem] \label{theo:delzant} 
There is a one-to-one correspondence between isomorphism 
classes of symplectic toric manifolds and Delzant polytopes, 
given by:
\begin{equation}\label{iso}
\left[(M,\omega,\T,\mu) \right]\ni \Mo \longmapsto
\mu(M) \in \Dt.
\end{equation}
\end{theorem}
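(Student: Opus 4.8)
\textbf{Proof proposal for Delzant's Theorem (Theorem \ref{theo:delzant}).}

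The plan is to establish that the map \eqref{iso} is well defined, injective, and surjective, treating these three properties in turn. For \emph{well-definedness}, I would first recall the Atiyah--Guillemin--Sternberg convexity theorem: for a Hamiltonian torus action on a compact connected symplectic manifold, $\mu(M)$ is a convex polytope, equal to the convex hull of the images of the fixed points. I would then argue that, when the torus has half the dimension of the manifold and the action is effective, this polytope is Delzant. The key local input is the equivariant Darboux theorem (or the local normal form for toric actions near a fixed point): near a fixed point $p$, the action is equivariantly symplectomorphic to a linear torus action on $(\mathbb{C}^n,\omega_{\mathrm{std}})$, and the weights of this linear isotropy representation form a $\Z$-basis of the weight lattice $\Z^n$ precisely because the action is effective. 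These weights are (up to sign) the primitive edge vectors emanating from the vertex $\mu(p)$, which simultaneously gives simplicity (exactly $n$ edges at each vertex), rationality, and smoothness. Finally, well-definedness on isomorphism classes is immediate: if $\varphi\colon M\to M'$ is an equivariant symplectomorphism with $\mu'\circ\varphi=\mu$, then $\mu(M)=\mu'(\varphi(M))=\mu'(M')$.

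\emph{Injectivity} is exactly Theorem \ref{Del1} as stated in the excerpt: if $\mu(M)=\mu'(M')$ then there is an equivariant symplectomorphism $\varphi$ intertwining the momentum maps, so $[(M,\omega,\T,\mu)]=[(M',\omega',\T,\mu')]$ in $\Mo$. So for this part I would simply invoke Theorem \ref{Del1}; nothing further is needed.

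\emph{Surjectivity} is the substantive construction and is where the real work lies. Given a Delzant polytope $\Delta\subset\R^n$ with facets $F_1,\dots,F_d$ defined by inward normals $u_1,\dots,u_d\in\Z^n$ and scalars $\lambda_1,\dots,\lambda_d$, so that $\Delta=\{x\in\R^n : \langle x,u_i\rangle\ge -\lambda_i,\ i=1,\dots,d\}$, I would carry out the Delzant construction: consider the surjection $\pi\colon \R^d\to\R^n$ sending $e_i\mapsto u_i$, which by the smoothness condition descends to a surjection of tori $\T^d\to\T^n$ with kernel a subtorus $N\subset\T^d$; then perform symplectic reduction of $(\C^d,\omega_{\mathrm{std}})$ by the Hamiltonian $N$-action at an appropriate value of the moment map determined by the $\lambda_i$. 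One must check that this value is a regular value and that $N$ acts freely on its level set — this is precisely where the Delzant hypotheses (simplicity and smoothness at each vertex) are used — so that the reduced space $M_\Delta$ is a smooth compact symplectic manifold of dimension $2n$. The residual $\T^n=\T^d/N$ action on $M_\Delta$ is Hamiltonian, effective, and its moment map has image exactly $\Delta$. I would then need to match conventions with Section \ref{conventions}: the identification $\mathcal{I}\colon\mathfrak{t}^*\to\R^n$ is fixed, so one must verify that the moment map produced by the construction, read through $\mathcal{I}$, has image equal to the given $\Delta\in\Dt$ on the nose rather than just up to $\agl(n,\Z)$-equivalence — this is a bookkeeping point but an essential one given how $\Mo$ was defined.

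\textbf{Main obstacle.} The hardest part is the surjectivity argument, specifically verifying that the Delzant construction produces a \emph{smooth} manifold with a \emph{free} $N$-action on the relevant level set; this is exactly the step that forces the smoothness hypothesis in Definition \ref{del 2} and requires the careful local analysis near vertices (where only $n$ of the $d$ constraints are active, and their normals form a $\Z$-basis). Injectivity, by contrast, is entirely outsourced to Theorem \ref{Del1}, and well-definedness is a routine consequence of convexity plus the equivariant Darboux normal form. In a self-contained treatment I would either reproduce the reduction argument in detail or, as is done here, cite \cite{De1988,anacannas} for the construction and concentrate the exposition on how it interacts with the fixed identification $\mathcal{I}$ and the metric structures introduced in Section \ref{subsection:metric}.
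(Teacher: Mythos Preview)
The paper does not contain a proof of Theorem \ref{theo:delzant}; it is stated there as a known classical result and attributed to Delzant's original paper \cite{De1988}. So there is no ``paper's own proof'' to compare your proposal against. Your outline is the standard route (convexity plus local normal form for well-definedness, Theorem \ref{Del1} for injectivity, the symplectic reduction construction for surjectivity), and it is consistent with the treatments in \cite{De1988,anacannas} that the paper cites. In the context of this paper, the appropriate thing to do is simply to cite those references rather than reproduce the argument.
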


As a consequence of this bijection, we can endow $\Mo$ with 
the pullback metric. 
\begin{definition}\label{metric on Mo}
Let $\mathbf{M}_1 = 
(M_1,\omega_1,\T,\mu_1)$ and $\mathbf{M}_2=
(M_2,\omega_2,\T,\mu_2)$ be two symplectic toric manifolds. 
We define
$d_{\T}(\mathbf{M}_1,\mathbf{M}_2)$ to be the Lebesgue measure of the symmetric difference
of $\mu_1(M_1)$ and $\mu_2(M_2)$.
\end{definition} 

\begin{remark}\label{DH}
Note that the metric $d_{\T}$ defined above 
is related to the \emph{Duistermaat-Heckman measure} 
(\cite{DuHe1982}). Indeed, for a symplectic toric manifold 
$M$ with momentum map $\mu$, the induced Duistermaat-Heckman 
measure of a Borel set $U\subset \R^n\simeq \mathfrak{t}^*$ 
is given by 
$$
m_{{\rm DH}}(U)=\lambda(U\cap \mu(M)).
$$
\end{remark}

\begin{remark} \label{De}
T. Delzant \cite[Section 5]{De1988} observed that a Delzant 
polytope gives rise to a fan (``\'eventail" in French), 
and that the symplectic toric manifold with associated 
Delzant polytope $\Delta$ is $\T$\--equivariantly 
diffeomorphic to the \emph{toric variety} 
defined by the fan. 

The toric variety is an $n$-dimensional complex analytic 
manifold, and the action of the real torus $\T$ on it
has an extension to a complex analytic action on  
the complexification $\T_{\mathbb{C}}$ of $\T$. 
\end{remark}

\begin{remark}
In dimension $4$, there is another class of integrable systems 
which is classified: those called semitoric 
\cite{PeVN2009,PeVN2011}. The classification of almost 
toric systems is begun in \cite{PeRaVN2012}.
\end{remark}

Now we introduce a weaker notion of equivalence between toric 
manifolds, following \cite{KKP}.

\begin{definition}\label{weaker}
Two symplectic toric manifolds $(M,\omega,\T,\mu)$ and 
$(M',\omega',\T,\mu')$ are \emph{weakly isomorphic} if there 
exists an automorphism of the torus $h\colon \T\to \T$ and 
an $h$-equivariant symplectomorphism $\varphi\colon M\to M'$, 
i.e., the following diagram commutes:
\begin{equation}\label{comm action2}
\xymatrix{
\T\times M \;\; \ar[r]^{\;\;\rho^*} \ar[d]_{(h,\varphi)} & M 
\ar[d]_{\varphi} \\
\T \times M' \;\;\ar[r]^{\;\;\rho'^*} &  M', } 
\end{equation}
where $\rho^\ast(x,m): = \rho(x)(m)$ for all $x\in\mathbb{T}$, 
$m \in M$, and similarly for $\rho'^*$.
We denote by $\Mt$ the moduli space of weakly isomorphic 
$2n$-dimensional toric manifolds.
\end{definition}

It is easy to see that two weakly isomorphic toric manifolds 
are isomorphic if $h$ is the identity and 
$\mu' \circ \varphi =\mu$.

Recall that the automorphism group of the torus $\T=\R^n/(2\pi\Z)^n$ 
is given by $\operatorname{GL}(n,\Z)$; thus the automorphism 
$h$ is represented by a matrix $A\in \operatorname{GL}(n,\Z)$.
Let $\agl(n,\Z)$ be the group of affine transformations of 
$\R^n$ given by
\begin{equation}
\label{aff_tr}
x\mapsto Ax+c,
\end{equation} 
where $A\in \operatorname{GL}(n,\Z)$ and $x, c\in \R^n$. Two
sets are \emph{$\agl(n,\Z)$-congruent} if one is the image
of the other by an affine transformation \eqref{aff_tr}. 
We have the following result.

\begin{prop}{\rm (\cite[Proposition 2.3 (2)]{KKP})}
\label{K et al}
Two symplectic toric manifolds $(M,\omega,\T,\mu)$ and 
$(M',\omega',\T,\mu')$ are weakly isomorphic if and only if 
their momentum map images are $\agl(n,\Z)$-congruent.
\end{prop}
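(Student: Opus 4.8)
The plan is to prove both implications using Delzant's Theorem (Theorem \ref{theo:delzant}), which identifies the isomorphism class of a symplectic toric manifold with its momentum polytope, together with a careful bookkeeping of how a torus automorphism acts on momentum maps. First I would unwind the definitions: fixing the identifications $\mathfrak{t}\cong\R^n$ and $\mathfrak{t}^*\cong\R^n$ from Section \ref{conventions}, an automorphism $h\colon\T\to\T$ corresponds to a matrix $A\in\operatorname{GL}(n,\Z)$, and I would check that if $\varphi\colon M\to M'$ is $h$-equivariant then $\mu'\circ\varphi$ and $\mu$ are related by the transpose action of $A$, i.e.\ $\mu'\circ\varphi = (A^{-1})^{\mathsf T}\mu + c$ for some constant $c\in\R^n$. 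The key computation here is that for $X\in\mathfrak{t}$, the infinitesimal generator $X_M$ is $\varphi$-related to $(h_*X)_{M'}$, so pulling back the defining relation $\iota_{Y_{M'}}\omega' = {\rm d}\langle\mu',Y\rangle$ along $\varphi$ with $Y=h_*X$ and using $\varphi^*\omega'=\omega$ gives ${\rm d}\langle\mu'\circ\varphi, h_*X\rangle = \iota_{X_M}\omega = {\rm d}\langle\mu,X\rangle$; since this holds for all $X$ and $M$ is connected, $\langle\mu'\circ\varphi, h_*X\rangle$ and $\langle\mu,X\rangle$ differ by a constant, which after transporting through the identifications is exactly the statement that $\mu'(M')$ is the image of $\mu(M)$ under an element of $\agl(n,\Z)$ (the linear part being the transpose-inverse of $A$, the translation part coming from the integration constant).

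For the forward direction, suppose $(M,\omega,\T,\mu)$ and $(M',\omega',\T,\mu')$ are weakly isomorphic via $(h,\varphi)$. Applying the computation above, $\mu'(M') = \varphi(M)$ composed appropriately shows $\mu'(M') = \alpha(\mu(M))$ where $\alpha\in\agl(n,\Z)$, so the momentum images are $\agl(n,\Z)$-congruent. For the converse, suppose $\mu'(M') = \alpha(\mu(M))$ for some $\alpha(x)=Ax+c$ with $A\in\operatorname{GL}(n,\Z)$. I would first observe that $\mu(M)$ is a Delzant polytope and that $\alpha$ carries Delzant polytopes to Delzant polytopes (this is where $A\in\operatorname{GL}(n,\Z)$ and not merely $\operatorname{GL}(n,\R)$ is essential — the smoothness/rationality conditions in Definition \ref{del 2} are preserved precisely by integral unimodular affine maps). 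Hence $\alpha(\mu(M)) = \mu'(M')$ is a Delzant polytope which is also the momentum image of the toric manifold obtained from $(M,\omega,\T,\mu)$ by precomposing the action with the automorphism $h$ corresponding to $A$ (and translating the momentum map by $c$); call this toric manifold $(M,\omega,\T,\tilde\mu)$ with $h$-twisted action. By construction its momentum image equals $\mu'(M')$, so by Delzant's Theorem there is an equivariant symplectomorphism $(M,\omega,\T,\tilde\mu)\to(M',\omega',\T,\mu')$, which unwinds to an $h$-equivariant symplectomorphism $M\to M'$, establishing the weak isomorphism.

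The main obstacle I anticipate is the converse direction, specifically verifying cleanly that the construction "twist the $\T$-action on $M$ by the automorphism $h$ and translate $\mu$" yields a bona fide symplectic toric manifold whose momentum image is exactly $\alpha(\mu(M))$, with the correct signs and with effectiveness of the twisted action preserved (effectiveness is immediate since $h$ is an automorphism). The sign bookkeeping — whether the polytope transforms by $A$, $A^{\mathsf T}$, $(A^{-1})^{\mathsf T}$, or $A^{-1}$ — is the kind of thing that is routine but easy to get wrong, and I would pin it down by testing on a one-dimensional example or on $\mathbb{CP}^1$. Everything else (that $d$-congruence is an equivalence relation, that the twisted data satisfies \eqref{oo}) is formal. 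Since this is exactly \cite[Proposition 2.3(2)]{KKP}, I would also simply cite that reference for the detailed verification and present the proof above as the conceptual outline.
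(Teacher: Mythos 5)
The paper does not supply its own proof of this proposition; it simply cites \cite[Proposition 2.3(2)]{KKP}. Your proposal is therefore a self-contained reconstruction rather than a comparison against an argument in the text, and the reconstruction is structurally sound. For the forward direction, the correct key lemma is exactly the one you identify: if $\varphi$ is $h$-equivariant then $X_M$ is $\varphi$-related to $(h_*X)_{M'}$, and combined with $\varphi^*\omega'=\omega$ and connectedness of $M$ this forces $h^*(\mu'\circ\varphi)=\mu+\mathrm{const}$, i.e.\ $\mu'\circ\varphi=(A^{\mathsf T})^{-1}\mu+c$ when $h_*$ is represented by $A\in\operatorname{GL}(n,\Z)$. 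For the converse, reducing to Delzant's rigidity (Theorem \ref{theo:delzant}) by twisting the $\T$-action on $M$ is precisely the right move, and effectiveness of the twisted action is indeed automatic.

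The one spot where your sketch is imprecise is the one you flag yourself, and it is worth being concrete about why: if you set $\tilde\rho:=\rho\circ g$ for a torus automorphism $g$ with $g_*$ represented by a matrix $B$, then the twisted moment map is $\tilde\mu=g^*\mu=B^{\mathsf T}\mu$. Thus to arrange $\tilde\mu=A\mu+c$ you must choose $g$ with $g_*=A^{\mathsf T}$ (not $g_*=A$), and the resulting equivariant symplectomorphism $\varphi\colon(M,\omega,\T,\tilde\mu)\to(M',\omega',\T,\mu')$ supplied by Delzant's Theorem unwinds to a $g^{-1}$-equivariant map, so the automorphism $h$ realizing the weak isomorphism has $h_*=(A^{\mathsf T})^{-1}$. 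This is internally consistent with what the forward direction produced, and fixing it is exactly the ``routine bookkeeping'' you anticipate. With that clarification the argument is complete, and deferring to \cite{KKP} for the full verification, as both you and the paper do, is entirely appropriate.
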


Thus, if we define $\Dtt$ to be the moduli space of 
$\agl(n,\Z)$-equivalent (or, congruent) Delzant polytopes, 
$$\Dtt:=\Dt/\agl(n,\Z),$$ 
Proposition \ref{K et al} implies that  the isomorphism in 
\eqref{iso} descends to an isomorphism
\begin{equation}\label{iso moduli}
\Mt \longrightarrow \Dtt.
\end{equation}
Let $\pi\colon \Dt\to \Dtt$ be the projection map; we endow 
$\Dtt$ with the quotient topology $\widetilde{\delta}$.

We endow $\Mt$ with the topology $\widetilde{\nu}$ induced by 
the isomorphism \eqref{iso moduli}.

\section{Connectedness of the space  
$(\Dtt,\widetilde{\delta})$} 
\label{sec:conn}

\subsection{Classification of Delzant polygons in 
$\mathbb{R}^2$}

We introduce now the definitions of rational length and corner 
chopping of size $\varepsilon$, which will be used in the 
classification of the Delzant polytopes in $\R^2$.

\begin{definition}(following \cite[2.4 and 2.11]{KKP})
\begin{itemize}
\item[(i)]
The \emph{rational length} of an interval $I$ of rational 
slope in $\R^n$ is the unique number $l=\len(I)$ such that 
the interval is $\operatorname{AGL}(n,\Z)$-congruent
to an interval of length $l$ on a coordinate axis. 
\item[(ii)]
Let $\Delta$ be a Delzant polytope in $\R^n$ and $v$ a vertex 
of $\Delta$. Let 
$$
\{v+t u_i\mid 0\leq t\leq \ell_i\}
$$ 
be the set of edges emanating from $v$, where the 
$u_1,\ldots,u_n$ generate the lattice $\Z^n$ and
$\ell_i=\len(u_i)$ (as defined above in (i)).

For $\varepsilon>0$ smaller than the $\ell_i$'s, 
the \emph{corner chopping of size} $\varepsilon$ of $\Delta$ 
at $v$ is the polytope $\Delta'$ obtained from $\Delta$ by 
intersecting it with the half space  
$$
\{v+t_1u_1+\cdots +t_nu_n \mid t_1+\cdots+t_n\geq 
\varepsilon\}.
$$
\end{itemize}
\end{definition}

\begin{figure}[h!]
\begin{center}
\epsfxsize=\textwidth
\leavevmode
\psfrag{d}{$\varepsilon$}
\includegraphics[width=1.6in]{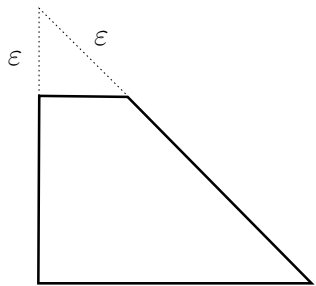}
\end{center}
\caption{A corner chopping of size $\varepsilon$.}
\label{moving edge 1}
\end{figure}

In $\mathbb{R}^2$, all Delzant polygons can be obtained by a 
recursive recipe, which can be found in 
\cite[Lemma 2.16]{KKP} and is recalled below.

\begin{lemma} {\rm (}see {\rm \cite{Fu1993}, Sec. 2.5 and Notes to Chapter 2)}
\label{fulton}
The following hold:
\begin{enumerate}
\item \label{f1}
Any Delzant polygon $\Delta \subset \mathbb{R}^2$ with three edges is $\operatorname{AGL}(2,\mathbb{Z})$-congruent to the Delzant triangle $\Delta_\lambda$ for a unique $\lambda > 0$ (Example \ref{ex:Delzant}).
\item \label{f2}
For any Delzant polygon $\Delta \subset \mathbb{R}^2$  with $4+s$ edges, where $s$ is a non-negative integer, there exist positive numbers $a \geq b > 0$, an integer $0 \leq k \leq 2a/b$, and positive numbers $\varepsilon_1, \ldots,\varepsilon_s$, such that $\Delta$ is $\operatorname{AGL}(2,\mathbb{Z})$-congruent to a Delzant polygon that is obtained from the Hirzebruch trapezoid $H_{a,b,k}$ (see Example \ref{ex:Delzant}) by a sequence of corner choppings of sizes $\varepsilon_1, \ldots, \varepsilon_s$.
\end{enumerate}
\end{lemma}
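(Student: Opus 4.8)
The statement is a classical fact from toric geometry restated in the Delzant-polytope language, so the plan is to reduce it to the structure theory of smooth complete toric surfaces and then translate back. First I would recall that a Delzant polygon $\Delta\subset\R^2$ with $N$ edges corresponds, via Theorem \ref{theo:delzant}, to a smooth compact toric symplectic $4$-manifold whose associated complex toric surface $X_\Delta$ has Picard rank $N-2$; the edges of $\Delta$ are in order-reversing bijection with the $\T$-invariant divisors, and the smoothness condition (iii) says that consecutive primitive inward normals $n_i,n_{i+1}$ form a $\Z$-basis of $\Z^2$, equivalently $\det(n_i,n_{i+1})=1$. A corner chopping of size $\varepsilon$ at a vertex is exactly the polytope operation corresponding to blowing up the fixed point attached to that vertex, so part \ref{f2} amounts to: every smooth complete toric surface other than $\mathbb{P}^2$ is obtained from a Hirzebruch surface $\mathbb{F}_k$ by a finite sequence of torus-equivariant blowups at fixed points, and part \ref{f1} is the fact that the only smooth complete toric surface with three boundary rays is $\mathbb{P}^2$, whose moment polygon is $\operatorname{AGL}(2,\Z)$-equivalent to a unique dilate $\Delta_\lambda$ of the standard triangle.

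For part \ref{f1}: a triangle has three edges with primitive inward normals $n_1,n_2,n_3$; consecutive pairs being $\Z$-bases forces, after an $\operatorname{GL}(2,\Z)$ change making $n_1=(1,0)$, $n_2=(0,1)$, that $n_3=(-1,-1)$ (the only primitive vector making both remaining consecutive determinants equal $1$ and the normals positively span the plane). Hence $\Delta$ is $\operatorname{GL}(2,\Z)$-congruent, and then after a translation $\operatorname{AGL}(2,\Z)$-congruent, to the standard triangle scaled by some $\lambda>0$; uniqueness of $\lambda$ follows because $\operatorname{AGL}(2,\Z)$ preserves Lebesgue measure, so $\lambda$ is determined by $\operatorname{vol}(\Delta)$. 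I would cite Example \ref{ex:Delzant} for the normalization of $\Delta_\lambda$ and the elementary lattice computation for the normals.

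For part \ref{f2}: proceed by induction on the number of edges $N=4+s$. When $s=0$ the polygon is a Delzant quadrilateral; I would show directly that its fan is a smooth complete fan with four rays, classify such fans up to $\operatorname{GL}(2,\Z)$ by choosing two adjacent normals as the standard basis and using the smoothness relations $n_{i-1}+n_{i+1}=a_i n_i$ around the boundary (the self-intersection/continued-fraction relations), and observe that the solutions are precisely the Hirzebruch trapezoids $H_{a,b,k}$ with the stated constraints $a\ge b>0$, $0\le k\le 2a/b$ — the inequality on $k$ coming from the requirement that the trapezoid be convex and its two slanted edges genuinely bound it, and the normalization $a\ge b$ from the freedom to relabel the two parallel edges. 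For the inductive step, given a Delzant polygon with $N\ge 5$ edges, I would invoke the toric-surface fact that such a polygon always has a vertex that is the result of a corner chop, i.e., a vertex $v$ whose adjacent edges have rational lengths strictly positive and whose ``un-chopping'' (replacing the short edge through $v$ by extending the two neighbouring edges until they meet) yields again a Delzant polygon with one fewer edge; equivalently, on the fan side, a smooth complete fan with more than four rays has a ray that can be removed to give a smooth complete fan (the existence of a $(-1)$-curve among the invariant divisors, guaranteed because a smooth complete toric surface with $\rho\ge 3$ is not minimal). Removing it gives $\Delta'$ with $N-1$ edges; by induction $\Delta'$ is $\operatorname{AGL}(2,\Z)$-congruent to a chop-sequence on some $H_{a,b,k}$, and re-inserting the chopped corner (which is an $\operatorname{AGL}(2,\Z)$-natural operation, commuting with the congruence) exhibits $\Delta$ in the desired form with one extra chopping size $\varepsilon_s$.

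\textbf{Main obstacle.} The routine lattice bookkeeping (normals forming $\Z$-bases, the quadrilateral classification, the constraint $0\le k\le 2a/b$) is tedious but mechanical; the genuine content, and the step I expect to be hardest to pin down cleanly, is the inductive ``un-chopping'' claim: that \emph{every} Delzant polygon with at least five edges admits a vertex which is a corner chop of a smaller Delzant polygon, and that the resulting polygon is still Delzant (convexity must be preserved, and no two normals may become parallel or violate smoothness after the merge). This is the combinatorial shadow of ``a smooth complete toric surface of Picard rank $\ge 3$ contains a torus-invariant $(-1)$-curve,'' and I would either import it directly from \cite{Fu1993} (Section 2.5 and the Notes to Chapter 2, as cited) or reprove it by examining the sequence of integers $a_i$ defined by $n_{i-1}+n_{i+1}=a_i n_i$ and showing that if all $a_i\ge 0$ around a boundary with $\ge 5$ edges the normals cannot positively span $\R^2$, forcing some $a_i=1$, which is exactly a choppable corner.
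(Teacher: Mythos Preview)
The paper does not supply a proof of Lemma~\ref{fulton}: it is stated as a known classification result, with a citation to \cite[Sec.~2.5 and Notes to Ch.~2]{Fu1993} and to \cite[Lemma~2.16]{KKP}, and is then used as a black box in the path-connectedness argument. So there is no ``paper's own proof'' to compare against.

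That said, your proposal is the standard argument and is essentially correct. The translation to toric surfaces (corner chop $\leftrightarrow$ equivariant blowup at a fixed point, Delzant triangle $\leftrightarrow \mathbb{P}^2$, Hirzebruch trapezoid $\leftrightarrow \mathbb{F}_k$) is exactly how Fulton's classification is imported into the symplectic setting in \cite{KKP}, and your identification of the inductive ``un-chopping'' step as the only nontrivial input is accurate.

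One small imprecision worth tightening in your final paragraph: the combinatorial contradiction you sketch (``if all $a_i\ge 0$ the normals cannot positively span $\R^2$'') is not quite the right dichotomy. With the convention $n_{i-1}+n_{i+1}=a_i n_i$ for the ray generators, a choppable edge corresponds to $a_i=1$, and the obstruction to having no such edge when $N\ge 5$ comes from the identity $\sum_i a_i = 12-3N$ (equivalently, Noether's formula for the toric surface), which forces many $a_i$ to be negative and in particular at least one $a_i\ge 1$; one then checks that $a_i\ge 2$ for all $i$ is impossible by a winding-number argument. Simply asserting ``all $a_i\ge 0$ fails'' does not by itself isolate an $a_i=1$. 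Since you already plan to cite \cite{Fu1993} for this step, this is not a gap in the proposal, but if you intend to reprove it you should use the correct constraint.
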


\begin{figure}[h!]
\begin{center}
\epsfxsize=\textwidth
\psfrag{l}{$\lambda$}
\psfrag{a}{$a$}
\psfrag{b}{$b$}
\psfrag{s}{$\mbox{slope}\,=-\frac{1}{k}$}
\leavevmode
\includegraphics[width=3.7in]{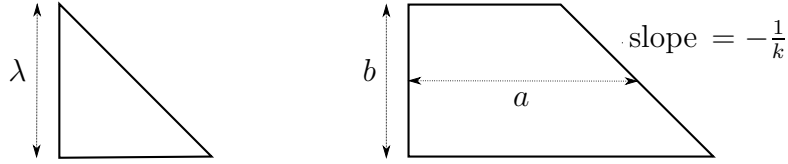}
\end{center}
\caption{The Delzant triangle $\Delta_{\lambda}$ and the Hirzebruch trapezoid $H_{a,b,k}$.}
\label{fig:polygons}
\end{figure}

\begin{example} \label{ex:Delzant}
Figure \ref{fig:polygons} shows the Delzant triangle  
$\Delta_{\lambda}$ and the Hirzebruch trapezoid $H_{a,b,k}$.
The \emph{Delzant triangle},
$$ 
\Delta_\lambda := \left\{ \left.(x_1,x_2) \in \mathbb{R}^2 
\ \right| \
 x_1 \geq 0 ,\; x_2 \geq 0 ,\,\,\, x_1 + x_2 \leq \lambda 
 \right\}, 
$$
is the momentum map image of the standard $\T^2$ action on $
\mathbb{CP}^2$ endowed
with the Fubini-Study symplectic form multiplied by $\lambda$. 

The \emph{Hirzebruch trapezoid}, 
$$ 
H_{a,b,k} := \left\{ (x_1,x_2)\in\mathbb{R}^2 \ \left| \
   -\frac{b}{2} \leq x_2 \leq \frac{b}{2}, \right.\,\,\, 0 \leq x_1 
   \leq a - kx_2 \right\}, 
$$
is the momentum map image of the standard toric action on a 
Hirzebruch surface.

Here, $b$ is the height of the trapezoid, 
$a$ is its average width, and $k\geq 0$ is a non-negative 
integer such that the right edge has slope $-1/k$ or is 
vertical if $k=0$. Moreover $a$ and $b$ have to satisfy
$a \geq b$ and $a - k \frac{b}{2} >  0$.
\end{example}

\subsection{Proof of Theorem \ref{key} (a)} 
Recall that $(\mathcal{D}_\T,d)$ is the space of Delzant 
polytopes in $\R^2$ together with the distance function given 
by the area of the symmetric difference and that $\Dtt$ is 
the quotient space $\Dt/{\rm AGL}(2,\Z)$ with the quotient 
topology $\widetilde{\delta}$, induced by the quotient map 
$\pi\colon \Dt\to\Dtt$.

In order to prove Theorem \ref{key} (a), given the isomorphism 
\eqref{iso moduli}, it is enough to prove the following 
statement.

\begin{theorem}
The space $(\Dtt,\widetilde{\delta})$ is path-connected.
\end{theorem}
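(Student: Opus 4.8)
The plan is to use Lemma \ref{fulton} to reduce path-connectedness to connecting a few explicit families of polygons, exploiting that $\pi\colon\Dt\to\Dtt$ is continuous, so the image of a path in $\Dt$ is a path in $\Dtt$. First I would observe that in $\Dt$ the operations of scaling a polytope, translating it, and performing a corner chopping of size $\varepsilon$ all depend continuously (in the $d$-metric, which is just the Lebesgue measure of the symmetric difference) on the real parameters involved: indeed, if $\Delta_t$ varies so that the defining half-spaces move continuously, then $\lambda(\Delta_s\Delta\Delta_t)\to 0$ as $s\to t$. In particular, as $\varepsilon\downarrow 0$ a corner chopping of size $\varepsilon$ converges to the unchopped polytope, which is the key continuity fact that lets us ``undo'' corner choppings along a path.

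Next I would fix a basepoint, say the class $[\Delta_1]$ of the standard Delzant triangle, and show every class can be joined to it. Given an arbitrary Delzant polygon $\Delta$, Lemma \ref{fulton} says $\Delta$ is $\agl(2,\Z)$-congruent either to $\Delta_\lambda$ for some $\lambda>0$, or to a polygon obtained from a Hirzebruch trapezoid $H_{a,b,k}$ by corner choppings of sizes $\varepsilon_1,\dots,\varepsilon_s$; since $\pi$ collapses $\agl(2,\Z)$-orbits, it suffices to connect these explicit models to the basepoint in $\Dtt$. For the triangle case, $t\mapsto\Delta_{(1-t)+t\lambda}$ is a path in $\Dt$ from $\Delta_1$ to $\Delta_\lambda$, hence its projection is a path in $\Dtt$. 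For the chopped-Hirzebruch case, I would first shrink all the chopping sizes to zero simultaneously, $\varepsilon_i(t)=(1-t)\varepsilon_i$, giving a path in $\Dt$ (continuous by the remark above, with a small check that for $t$ near $1$ the sizes stay below the relevant edge lengths, which they do since they only decrease) from the chopped polygon down to $H_{a,b,k}$ itself. Then I need a path in $\Dtt$ from $[H_{a,b,k}]$ to $[\Delta_1]$.

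To connect $H_{a,b,k}$ to the basepoint I would argue in two moves. First, within the family of Hirzebruch trapezoids one can continuously deform $(a,b,k)$: the parameter $k$ is discrete, but a corner chopping of a suitable Hirzebruch trapezoid produces (up to $\agl(2,\Z)$) another Hirzebruch trapezoid with $k$ changed by one — this is the standard fact that blowing up/down a Hirzebruch surface changes the self-intersection number — so a single corner-chopping path followed by an $\agl(2,\Z)$-identification moves $k$ to $k\mp 1$; iterating, I reduce to $k=0$, i.e.\ to a rectangle $H_{a,b,0}=[0,a]\times[-b/2,b/2]$. Then I scale and translate the rectangle: $t\mapsto[0,a(t)]\times[-b(t)/2,b(t)/2]$ is a path in $\Dt$ (all such rectangles are Delzant), and I can drive $(a,b)\to$ a fixed small square, which is $\agl(2,\Z)$-congruent to a fixed Hirzebruch trapezoid, whence by one more corner chopping to a Delzant triangle, and finally rescale to $\Delta_1$. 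Composing all these segments and projecting to $\Dtt$ gives a path from $[\Delta]$ to the basepoint.

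The main obstacle I expect is the discreteness of $k$: unlike $a$, $b$, and the $\varepsilon_i$, it cannot be deformed continuously, so the argument must genuinely route through $\agl(2,\Z)$-equivalences realized by corner choppings — this is exactly why the statement is about $\Dtt$ rather than $\Dt$, and why Lemma \ref{fulton} (the recursive recipe of \cite{KKP}) is the indispensable tool. A secondary technical point is to verify carefully that each parametrized family I write down stays inside $\Dt$ (rationality and smoothness of vertices are preserved under scaling, translation, and corner chopping, but one should say so) and that the map $t\mapsto\Delta_t$ is continuous in the $d$-metric at the endpoints where choppings degenerate; both are routine given the continuity remark, but they are where the care goes. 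Everything else is just concatenation of finitely many explicit paths and invoking continuity of $\pi$.
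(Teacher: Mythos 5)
Your overall strategy matches the paper's: invoke Lemma \ref{fulton} to reduce to connecting explicit models (Delzant triangles, Hirzebruch trapezoids, and their choppings), observe that scaling, translation, and corner-chopping paths are continuous in the $d$-metric, and push paths down through the continuous projection $\pi\colon\Dt\to\Dtt$. The triangle-to-triangle path, the shrink-all-choppings-to-zero path, and the degenerate chopping from a square to a Delzant triangle are all fine and appear (up to cosmetics) in the paper's proof.

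The gap is in your step that changes $k$. You assert that ``a corner chopping of a suitable Hirzebruch trapezoid produces (up to $\agl(2,\Z)$) another Hirzebruch trapezoid with $k$ changed by one,'' citing the blow-up/blow-down picture for Hirzebruch surfaces. This is false as stated: a corner chopping strictly increases the number of edges, and affine transformations in $\agl(2,\Z)$ preserve the number of edges, so a single chop of the quadrilateral $H_{a,b,k}$ can never be $\agl(2,\Z)$-congruent to any quadrilateral $H_{a',b',k'}$. (The blow-up/blow-down fact you invoke needs both a blow-up \emph{and} a blow-down of a different sphere, i.e.\ two operations, to land back on a Hirzebruch surface.) What the paper actually does is route through an intermediate pentagon: $H_{a,b,k}$ chopped at its top-right vertex and $H_{a,b,k+1}$ chopped at its bottom-right vertex yield the \emph{same} pentagon $H'_{a,b,k}$ (the smoothness check at the new vertex is $\det\begin{bmatrix}-(k+1)&k\\1&-1\end{bmatrix}=1$), so the path is a forward chopping of $H_{a,b,k}$ followed by a reverse chopping to $H_{a,b,k+1}$. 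This is the one place where your argument genuinely breaks; the rest is a correct rendering of the paper's proof.
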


\begin{proof}
Let $\mathcal{S}\subset \Dt$ be the subset that contains all 
Delzant triangles $\Delta_{\lambda}$ for $\la \in\R^+$,
all Hirzebruch trapezoids $H_{a,b,k}$ with $a,b\in \R$ such 
that $a\geq b>0$ and $k\geq 2a/b$ a non-negative integer, and 
also all Delzant polygons obtained from Hirzebruch trapezoids
by a sequence of corner choppings (cf. Lemma \ref{fulton}).  
Endow $\mathcal{S}\subset (\Dt,d)$ with the subspace topology.
We will prove that $\mathcal{S}$ is path-connected in $\Dt$.
In fact, by Lemma \ref{fulton}, we know that every element of 
$\Dtt$ has a representative in $\mathcal{S}$.
Hence, given $[P_0],[P_1]\in \Dtt$ with representatives 
$P_0,P_1\in \mathcal{S}$, if we prove that there exists a 
continuous path $\gamma\colon [0,1]\to \mathcal{S}$ such that
$\gamma(0)=P_0$ and $\gamma(1)=P_1$, by the continuity of 
$\pi|_{\mathcal{S}}\colon \mathcal{S}\to \Dtt$, it follows 
that there exists a continuous path
$\pi\circ \gamma\colon [0,1]\to\Dtt$ connecting $[P_0]$ 
to $[P_1]$, thus proving that $\Dtt$ is path-connected.

First of all, note that the intuitive paths from a Delzant 
polygon $P$ to a translation of $P$ or a scaling of
$P$ by a positive factor (or a composition of the two) are 
clearly continuous with respect to the topology
induced by $d$.  Furthermore, if $P\in \mathcal{S}$ then 
the entire path also lies in $\mathcal{S}$. The same holds 
when moving an edge parallel to itself without changing the 
total number of edges (see Figure \ref{moving edge}).

\begin{figure}[h!]
\begin{center}
\epsfxsize=\textwidth
\leavevmode
\includegraphics[width=2.2in]{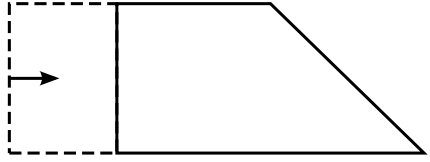}
\end{center}
\caption{Moving an edge parallel to itself.}
\label{moving edge}
\end{figure}

In particular, all Delzant triangles $\Delta_{\la}$ are 
connected by a path in $\mathcal{S}$ and so are the 
Hirzebruch trapezoids $H_{a,b,k}$, for each fixed $k$.

Secondly, if $P_{\varepsilon}$ is obtained from $P$ by a 
corner chopping of size $\varepsilon$ at a vertex $v\in P$, 
then $P$ and $P_{\varepsilon}$ are connected by a continuous 
path in $\Dt$; the path is simply given by 
$$\gamma\colon [0,1]\to \Dt \text{ with }\gamma(t):=P_{t\varepsilon}.$$
Thus any Delzant polygon obtained 
from $H_{a,b,k}$ by a sequence of  corner choppings
is connected to $H_{a,b,k}$ via a path in $\mathcal{S}$.

Third, we next show that for $k\geq 0$ there is a continuous 
path between the Hirzebruch trapezoid $H_{a,b,k}$ and the
Hirzebruch trapezoid $H_{a,b,k+1}$, where now $0\leq k,k+1\leq 2a/b$.
The first half of the path connects $H_{a,b,k}$ to the polygon $H'_{a,b,k}$ by corner chopping at the top right vertex, and the
inverse of the
 second half 
connects $H_{a,b,k+1}$ to $H'_{a,b,k}$ by corner chopping at the bottom right vertex (cf. Figure \ref{Hs}).

\begin{figure}[h!]
\begin{center}
\epsfxsize=\textwidth
\leavevmode
\psfrag{a}{$a$}
\psfrag{b}{$b$}
\psfrag{s1}{$\mbox{slope}\,=\;\;-\frac{1}{k+1}$}
\psfrag{s2}{$\mbox{slope}\,=\;\;-\frac{1}{k}$}
\includegraphics[width=5.3in]{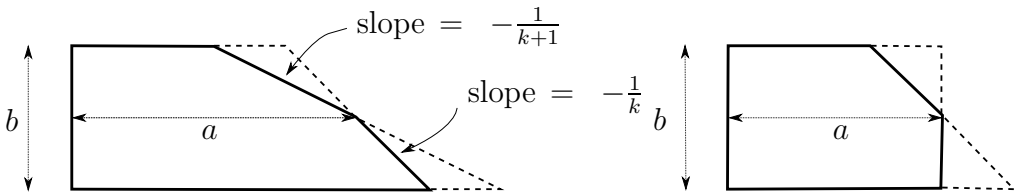}
\end{center}
\caption{$H'_{a,b,k}$ for $k\geq 1$ and $H'_{a,b,0}$.}
\label{Hs}
\end{figure}

Note that $H'_{a,b,k}$ is still a Delzant polygon; it suffices 
to check smoothness at the new vertex, and indeed 
$$
\det\left[
\begin{array}{cc}
-(k+1) & k\\
1 & -1
\end{array}
\right]=1.
$$ 
Combining with previous observations, we conclude that all 
Hirzebruch trapezoids with $k\geq 0$ lie
in the same path-connected component of $\mathcal{S}$.

Finally, in order to conclude that $\mathcal{S}$ is 
path-connected, it remains to check that there exists a 
continuous path between, for example, $H_{\la,\la,0}$ and 
$\Delta_\la$. Let $\gamma\colon [0,1]\to \mathcal{S}$
be the path such that:
\begin{itemize}
\item[(i)]
$\gamma(t)$ is the corner chopping of size $\la t$ at the 
top right vertex of the square $H_{\la,\la,0}$ for $0< t<1$, 
\item[(ii)]
$\gamma(0):=H_{\la,\la,0}$, and 
\item[(iii)]
$\gamma(1):=\Delta_{\la}$.
\end{itemize}
The path $\gamma$ is continuous with respect to the topology 
on $\Dt$.
$\quad \square$
\end{proof}

\section{Topology of the space $(\Dt,d)$} 
\label{sec:compl}
In this section we prove Theorem \ref{key} (b). By the 
isomorphism \eqref{iso moduli}, it suffices to study
the topological properties of $(\Dt,d)$. 

Let $(\mathfrak{B}'(\R^2)/\sim,d)$ be the metric space 
introduced in Section \ref{subsection:metric}.
\begin{prop}
The space $(\Dt,d)$ is not complete. 
\end{prop}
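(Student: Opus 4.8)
The plan is to exhibit an explicit Cauchy sequence in $(\Dt,d)$ whose natural limit (in the $\mathrm{L}^1$ sense, i.e.\ in $\mathfrak{B}'(\R^2)/\!\sim$) is a convex set that fails to be a Delzant polytope, and hence lies outside $\Dt$. The cleanest choice is a sequence of Delzant polygons that converges in area to a region with smooth (curved) boundary, for instance a disk, or — staying closer to the polytope world — a sequence obtained from a fixed polygon by an increasing number of corner choppings whose sizes shrink fast enough. Concretely, I would start from the square $H_{\lambda,\lambda,0}$ and at one corner perform corner choppings that cut off, at stage $n$, a sequence of ever smaller triangular pieces approximating a quarter-circular arc; the resulting $\Delta_n \in \Dt$ form a sequence whose symmetric differences $d(\Delta_n,\Delta_m)$ are bounded by the total area of the (tiny) pieces added after stage $\min(n,m)$, which tends to $0$, so $(\Delta_n)$ is Cauchy.

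The key steps, in order, would be: (1) write down the sequence $\Delta_n$ explicitly, checking at each stage that it is still a Delzant polygon — this uses only that corner chopping preserves the Delzant condition, which is implicit in the corner-chopping construction recalled from \cite{KKP}; (2) estimate $d(\Delta_n,\Delta_{n+1})$, showing $\sum_n d(\Delta_n,\Delta_{n+1}) < \infty$, so that $(\Delta_n)$ is Cauchy in $(\Dt,d)$ (note $d$ here is the restriction of the $\mathrm{L}^1$ metric, which is complete on $\mathfrak{B}'(\R^2)/\!\sim$, so the sequence does have an $\mathrm{L}^1$-limit $L$); (3) identify $L$ as a fixed convex region whose boundary contains a genuine arc (or at least infinitely many edges), and argue that no Delzant polygon can be $\sim$-equivalent to $L$ — a Delzant polygon has finitely many edges and each vertex is a smooth rational cone, whereas $L$ has a curved boundary portion, so $\lambda(\Delta\,\Delta\,L)>0$ for every $\Delta\in\Dt$; (4) conclude that $(\Delta_n)$ is a Cauchy sequence in $\Dt$ with no limit in $\Dt$, hence $(\Dt,d)$ is not complete. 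Alternatively, and perhaps more simply, one could let $\Delta_n$ be an inscribed regular $n$-gon in a fixed disk (rescaled/positioned so each $\Delta_n$ is Delzant — this requires a little care since regular polygons are generally not rational, so the corner-chopping model is safer), and run the same argument.

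The main obstacle is step (3): one must be careful that the $\mathrm{L}^1$-limit $L$ is genuinely not represented by any Delzant polygon, \emph{including} after discarding measure-zero sets. This is where one needs the convexity and the ``curved boundary'' (or ``infinitely many edges'') feature of $L$: since two distinct compact convex sets with nonempty interior differ in a set of positive measure, it is enough to know that $L$ (a compact convex set with nonempty interior, being the Hausdorff limit of the $\Delta_n$) is not a polygon at all; then for any polygon $\Delta$, $\lambda(\Delta\,\Delta\,L)>0$. A secondary technical point is ensuring the approximating polygons can actually be taken \emph{rational and smooth} at every stage — corner chopping handles smoothness and rationality automatically, which is the reason for preferring the corner-chopping construction over, say, inscribed regular polygons. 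Once these points are in place, the Cauchy estimate in step (2) is a routine geometric-series bound on small triangle areas, and the conclusion is immediate.
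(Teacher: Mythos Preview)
Your approach is correct in outline and would work if carried out carefully, but it is considerably more elaborate than the paper's argument. The paper simply takes a one-parameter family of Hirzebruch trapezoids $H_{\frac{c}{n}+\frac{bk}{2},\,b,\,k}$ with $k\neq 1$ and lets the top edge (of length $c/n$) shrink to zero; the limit is a right triangle which is still a rational simple polygon but whose top vertex fails the smoothness condition, so it lies in $\mathcal{P}_\Q\setminus\Dt$. This avoids all the bookkeeping of iterated corner chops: the sequence is explicit, nestedness gives the Cauchy property immediately, and the limit is visibly a polygon with one non-smooth vertex, so no appeal to ``infinitely many edges'' or curved boundary is needed.

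Two small cautions about your version. First, repeated corner chops at one corner of the square will \emph{not} approximate a quarter-circular arc: the edge directions produced are constrained (in the natural scheme they run through $(1,-1),(1,-2),(1,-3),\ldots$), so the limit is a convex body with countably many flat edges accumulating at a point, not a smooth arc. Your parenthetical ``or at least infinitely many edges'' is the correct target, and your step (3) then goes through exactly as you say using the fact that two distinct compact convex bodies with interior differ by positive measure. Second, your remark that regular $n$-gons are ``generally not rational'' is right and is a genuine obstruction to that alternative; sticking with corner chops is the correct call. So your plan is sound, and in fact proves a slightly stronger phenomenon (limits can fall outside $\mathcal{P}_2$ altogether), but the paper's Hirzebruch-trapezoid example reaches the stated conclusion with far less work.
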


\begin{proof}
We prove that $(\Dt,d)$ is not complete, by
giving an example of a Cauchy sequence in $(\Dt,d)$ which 
converges in $(\mathfrak{B}'(\R^2)/\sim,d)$ whose limit is not
a smooth polytope, hence not in $\Dt$. For $k\neq 1$ consider 
the Hirzebruch surface $H_{a,b,k}$, and note that we can 
rewrite $a$ as $$a=c+\frac{bk}{2},$$ where $c$ is the length 
of the top facet. Then, the sequence 
$$
H_{\frac{c}{n}+\frac{bk}{2},b,k},\,\,\,\,\,\, n=1,2,3,\ldots
$$
is Cauchy, but its limit is a right angle triangle that is not 
Delzant (see Figure \ref{limit}). 
$\quad \square$
\end{proof}

\begin{remark}
Note that $(\Dt, d)$ is also non-compact, since compact metric 
spaces are automatically complete.
\end{remark}

\begin{figure}[h!]
\begin{center}
\epsfxsize=\textwidth
\leavevmode
\psfrag{v}{$v$}
\includegraphics[width=2in]{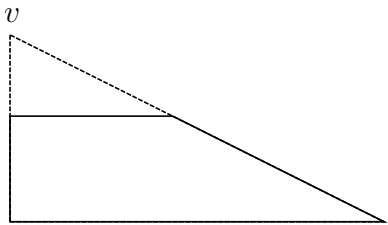}
\end{center}
\caption{The vertex $v$ is not smooth.}
\label{limit}
\end{figure}

Let $\mathcal{C}\subset \mathfrak{B}'(\R^2)$ be the space of 
convex compact subsets of $\R^2$ with positive Lebesgue 
measure. Note that if $A,B\in \mathcal{C}$ and $d(A,B)=0$ 
then $A=B$, and so $d$ is a metric on $\mathcal{C}$. 
The same observations hold for $\mathcal{P}_2$, the space of 
convex 2-dimensional polygons in $\R^2$, and $\mathcal{P}_\Q$, 
the space of rational convex 2-dimensional polygons in $\R^2$ 
(see Definition \ref{del 2}). We have the following inclusions 
of metric spaces:
$$
(\Dt,d)\subset(\mathcal{P}_\Q,d)\subset(\mathcal{P}_2,d)
\subset(\mathcal{C},d).
$$

As we will see in the proof of Theorem \ref{prop:completion}, 
all these inclusions are dense.
 
Let $\varnothing$ be the empty set, which clearly has zero 
Lebesgue measure, and define
$$
\hat{\mathcal{C}}:=\mathcal{C}\cup\varnothing.
$$ 
Thus $\hat{\mathcal{C}}\subset \mathfrak{B}'(\R^2)$ and
$(\hat{\mathcal{C}},d)$ is a metric space.
Observe that $(\mathcal{C},d)\subset (\hat{\mathcal{C}},d)$,
where the inclusion is a continuous map of metric spaces. 
With a similar argument 

As before, we have the inclusions of metric spaces:
$$
(\Dt,d)\subset(\mathcal{P}_\Q,d)\subset(\mathcal{P}_2,d)
\subset(\mathcal{C},d)\subset (\hat{\mathcal{C}},d).
$$

We will prove in Theorem \ref{C complete} that 
$(\hat{\mathcal{C}},d)$ is complete.
To do this, we need the following lemma.

\begin{lemma}\label{convex}
Let $A\in \mathfrak{B}(\R^2)$ be convex and non-bounded. 
If $\lambda(A)>0$, then $\lambda(A)=\infty$. 
\end{lemma}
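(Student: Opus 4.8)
The plan is to show that an unbounded convex set $A\subset\R^2$ of positive Lebesgue measure must actually have infinite measure, by exploiting convexity to build an infinite-area region inside $A$. First I would use the hypothesis $\lambda(A)>0$ to find a point $p$ in the interior of $A$; indeed, a convex set in $\R^2$ with positive measure cannot be contained in a line, so it has nonempty interior, and hence contains a closed disk $\overline{B}(p,r)$ for some $r>0$. Next, since $A$ is unbounded, I would choose a sequence of points $q_n\in A$ with $|q_n-p|\to\infty$; passing to a subsequence, the unit vectors $(q_n-p)/|q_n-p|$ converge to some unit vector $u$.

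The key geometric step is to observe that, by convexity, for each $n$ the set $A$ contains the convex hull of $\overline{B}(p,r)$ and $q_n$. This convex hull contains a triangle (or, more conveniently, a ``cone-like'' region) with apex near $q_n$ and base a segment of length $2r$ centered at $p$ and perpendicular to $q_n-p$: concretely, $A$ contains the convex hull of the two endpoints $p\pm r u^{\perp}$ of that base segment together with $q_n$, a triangle of area $r\,|q_n-p|$. Since $|q_n-p|\to\infty$, these triangles all lie in $A$ and their areas are unbounded, whence $\lambda(A)=\infty$. (Alternatively one can directly describe an infinite strip or half-cone contained in $A$: the union over all $n$ of these triangles, using that the directions converge to $u$, contains a genuine two-dimensional cone $\{p + s\,v : s\ge 0,\ v\in C\}$ with $C$ a nondegenerate cone of directions around $u$, which visibly has infinite area.)

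The main obstacle — really the only subtle point — is making the geometry of ``convex hull of a disk and a far-away point contains a big triangle'' precise, i.e.\ checking that the triangle with vertices $p+ru^{\perp}$, $p-ru^{\perp}$, $q_n$ is genuinely contained in $A$ and genuinely has area growing linearly in $|q_n-p|$. This follows because all three vertices lie in $A$ (the first two are in $\overline{B}(p,r)\subset A$, the third is $q_n\in A$) and $A$ is convex, so the whole triangle is in $A$; its area is $\tfrac12\cdot(2r)\cdot h_n$ where $h_n$ is the distance from $q_n$ to the line through $p\pm ru^\perp$, and $h_n = |q_n-p|\to\infty$ since that line is perpendicular to a direction converging to the direction of $q_n-p$ (so for $n$ large $h_n\ge \tfrac12|q_n-p|$, say). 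Then $\lambda(A)\ge \sup_n \operatorname{area}(\text{triangle}_n)=\infty$, completing the proof.
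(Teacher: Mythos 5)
Your proof is correct and follows essentially the same strategy as the paper's: use positive measure to locate a small ``fat'' region inside $A$ (you use an inscribed disk from nonempty interior; the paper inscribes a circle in a triangle of three noncollinear points of $A$), then cone that region to far-away points of $A$, producing triangles of unbounded area inside $A$. The only cosmetic difference is that you fix the base $p\pm r u^{\perp}$ and pass to a subsequence of convergent directions, whereas the paper instead chooses, for each far point $a_n$, the diameter $D_n$ of the inscribed circle perpendicular to the direction to $a_n$ so that the resulting triangle is isosceles and no subsequence is needed.
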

\begin{proof}
Consider two points $\mathbf{p},\mathbf{q}\in A$. By convexity 
of $A$, the straight line segment 
$\ell_{\mathbf{p}\mathbf{q}}$
connecting $\mathbf{p}$ to $\mathbf{q}$ is contained in $A$.
Since $\lambda(A)>0$, there exists a point $\mathbf{r}\in A$ 
not collinear to $\mathbf{p}$ and $\mathbf{q}$. Hence the 
whole triangle $\mathbf{p}\mathbf{q}\mathbf{r}$
is contained in $A$. 

Let $C$ be the circle inscribed in the triangle 
$\mathbf{p}\mathbf{q}\mathbf{r}$, and $\{a_n\}_{n\in \N}$ 
a sequence of points in $A$ such that 
$\|a_n\|_{\R^2}\to \infty$. 
For each $a_n$, there exists a diameter $D_n$ of $C$ such 
that the triangle $T_n\subset A$ with base
$D_n$ and third vertex $a_n$ is isosceles, which guarantees 
that $\lambda(T_n)\to \infty$, and so $\lambda(A)=\infty$.
$\quad \square$
\end{proof}

\begin{theorem}\label{C complete}
$(\hat{\mathcal{C}},d)$ is a complete metric space and it is 
the completion of $(\mathcal{C},d)$.
\end{theorem}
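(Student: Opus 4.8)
The plan is to prove two things: first, that $(\hat{\mathcal{C}},d)$ is complete, and second, that $\mathcal{C}$ is dense in $\hat{\mathcal{C}}$ (in fact the density chain $\Dt\subset\mathcal{P}_\Q\subset\mathcal{P}_2\subset\mathcal{C}\subset\hat{\mathcal{C}}$ gives more), which together identify $(\hat{\mathcal{C}},d)$ as the completion of $(\mathcal{C},d)$. The density of $\mathcal{C}$ in $\hat{\mathcal{C}}$ is cheap: the only extra point is $\varnothing$, and $\varnothing$ is approximated by any sequence of convex compact sets of shrinking positive measure (e.g. shrinking disks), so $\mathcal{C}$ is dense. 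The real content is completeness.

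For completeness, I would start with a Cauchy sequence $\{A_n\}$ in $\hat{\mathcal{C}}$. If infinitely many $A_n$ equal $\varnothing$, then since $d(A,\varnothing)=\lambda(A)$, the Cauchy condition forces $\lambda(A_n)\to 0$, so $A_n\to\varnothing\in\hat{\mathcal{C}}$ and we are done. So assume all but finitely many $A_n$ lie in $\mathcal{C}$; discard the finite exceptional part. Now $\{\chi_{A_n}\}$ is a Cauchy sequence in ${\rm L}^1(\R^2)$, which is complete, so $\chi_{A_n}\to f$ in ${\rm L}^1$ for some $f$, and passing to a subsequence we may assume $\chi_{A_n}\to f$ pointwise a.e.; hence $f=\chi_A$ a.e.\ for some Borel set $A$ of finite measure (the a.e.\ pointwise limit of $\{0,1\}$-valued functions is $\{0,1\}$-valued a.e.). The task is to show $A$ is (a.e.\ equal to) a convex compact set — or to $\varnothing$ if $\lambda(A)=0$, but we will see $\lambda(A)>0$. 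First, $\lambda(A)=\lim\lambda(A_n)>0$ cannot be ruled out a priori; if it is zero then $A\sim\varnothing$ and we are done. Assume $\lambda(A)>0$.

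The key structural step is to produce a genuine convex compact representative of the class of $A$. Here I would use the inscribed-and-circumscribed box / support-function idea. Since $\lambda(A)>0$ there is a closed triangle $T\subset A$ up to measure zero intersections — more carefully, by the pointwise a.e.\ convergence and Egorov/Fatou one can find a fixed small triangle (or disk) $T$ that, for $n$ large, is contained in $A_n$ up to arbitrarily small measure, hence (by convexity of each $A_n$ and a standard "thick core" argument) one finds a closed disk $B_0$ with $B_0\subset A_n$ for all large $n$; intersecting, $B_0\subset A_n$ eventually. This gives a uniform interior point. Next, the $A_n$ are uniformly bounded: $\lambda(A_n)$ is bounded, and a convex set containing a fixed disk $B_0$ with bounded area has bounded diameter (a long thin convex set containing $B_0$ has large area — this is exactly the mechanism of Lemma \ref{convex}), so all $A_n$ lie in a fixed large ball $B_1$. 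Now define $A_\infty:=\bigcap_{N}\overline{\operatorname{conv}\bigcup_{n\ge N}A_n}$; this is convex and compact (closed and inside $B_1$), and I would show $d(A_n,A_\infty)\to 0$ and $A_\infty\sim A$. To do this I would pass through the Hausdorff metric: a Cauchy-in-$d$ sequence of convex bodies all sandwiched between $B_0$ and $B_1$ is Cauchy in the Hausdorff metric as well (for convex bodies containing a fixed interior disk, symmetric-difference volume and Hausdorff distance are equivalent — small Hausdorff distance gives small symmetric-difference volume trivially by a collar estimate, and conversely small symmetric difference for bodies with inradius bounded below forces the support functions to be close), and the space of convex compact subsets of $B_1$ is complete in the Hausdorff metric by the Blaschke selection theorem, with the limit again convex compact; that limit is $A_\infty$, and then $d(A_n,A_\infty)\to 0$, so by uniqueness of ${\rm L}^1$ limits $A_\infty\sim A$, and $A_\infty\in\mathcal{C}$. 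Thus every Cauchy sequence converges in $\hat{\mathcal{C}}$.

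The main obstacle I expect is the technical heart of the previous paragraph: establishing the equivalence between convergence in $d$ and Hausdorff convergence \emph{for the tail of the sequence}, which requires first extracting the uniform interior disk $B_0$ and the uniform bounding ball $B_1$ from nothing but the $d$-Cauchy condition and $\lambda(A)>0$. The uniform bounding ball is the easy half (it is essentially Lemma \ref{convex} made quantitative: a convex set of area $\le V$ containing a disk of radius $r$ has diameter $\le 2V/r$). The uniform interior disk is the delicate half — one must rule out the $A_n$ "escaping to thinness" even though their areas stay bounded below; here one argues that if no fixed disk sat inside cofinitely many $A_n$, one could extract a subsequence whose intersection has measure zero, contradicting that $\lambda(A_n\cap A_m)\to\lambda(A)>0$ as $n,m\to\infty$ (which follows from $d(A_n,A_m)\to0$ and $\lambda(A_n)\to\lambda(A)$, via $\lambda(A_n\cap A_m)=\tfrac12(\lambda(A_n)+\lambda(A_m)-d(A_n,A_m))$). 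Once the two-sided sandwich $B_0\subset A_n\subset B_1$ is in hand, Blaschke selection and the collar estimates finish things routinely, so I would budget most of the write-up for that sandwich.
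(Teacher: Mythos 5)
Your proposal is correct in strategy but takes a genuinely different route from the paper. The paper works directly with the $L^1$ limit set $A$: it forms the convex hull $A'$ of $A$, shows $\lambda(A'\setminus A)=0$ by a barycentric argument (any $\mathbf{p}\in A'$ is a convex combination of finitely many points of $A$; by the a.e.\ pointwise convergence those points lie in $A_{n_l}$ for $l$ large, so by convexity of each $A_{n_l}$ also $\mathbf{p}\in A_{n_l}$, giving $\chi_{A_{n_l}}\to\chi_{A'}$ a.e.), then cites that the boundary of a convex set has Lebesgue measure zero (\cite{La}) to replace $A$ by the closed convex set $\overline{A'}$, and finally invokes Lemma \ref{convex} together with $\lambda(\overline{A'})=\lambda(A)<\infty$ to conclude $\overline{A'}$ is bounded, hence compact. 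No selection theorem or Hausdorff-metric machinery is used. Your route --- sandwich $B_0\subset A_n\subset B_1$, Blaschke selection, Shephard--Webster equivalence of the symmetric-difference and Hausdorff metrics --- is heavier but does yield a more geometric picture of why the limit is a convex body.

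The one place you would need to be careful is exactly where you flag it: the uniform interior disk. The heuristic you give does not quite close --- ``no fixed disk sits inside cofinitely many $A_n$'' does not obviously produce a subsequence whose intersection has measure zero, and even a measure-zero intersection of a subsequence is compatible with all pairwise intersections having measure $\ge\ell/2$, so there is no contradiction as stated. The fix is available but essentially re-derives the paper's mechanism: using $\chi_{A_{n_l}}\to\chi_A$ a.e.\ and $\lambda(A)>0$, pick three non-collinear points of $A$; for $l$ large all three lie in $A_{n_l}$, so by convexity the triangle they span, and hence an inscribed disk $B_0$, lies in $A_{n_l}$; then transfer from the subsequence to the full tail via the Cauchy property, using that a convex subset of a disk covering it up to small measure contains a concentric slightly smaller disk. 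Once you see that this three-points-plus-convexity step is unavoidable, the paper's direct passage through the convex hull $A'$ is shorter and makes Blaschke and the metric equivalence unnecessary --- the latter is better left, as the paper does, as a remark (Remark \ref{Hausdorff}) for transferring topological facts after the fact.
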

\begin{proof}
Let $\{A_n\}_{n\in \N}$ be a Cauchy sequence in 
$\hat{\mathcal{C}}$, with $A_n$ convex. By definition
of $d$ (see \eqref{distance L1}), the sequence 
$\{\chi_{A_n}\}_{n\in \N}$ is Cauchy in ${\rm L}^1(\R^2)$.
By completeness of ${\rm L}^1(\R^2)$, there exists a function 
$f\in {\rm L}^1(\R^2)$ such that 
$$
\left\|\chi_{A_n}-f\right\|_{{\rm L}^1}\rightarrow 0.
$$
Thus, there exists a subsequence $\chi_{A_{n_k}}$ and a zero 
measure set $E\subset \R^2$ such that 
$$
\chi_{A_{n_k}}(x)\to f(x)
$$ 
for all $x\in \R^2\setminus E$. Let 
$$
A=\{x\in \R^2\setminus E\mid f(x)=1\};
$$ 
from the definitions it follows immediately that
$\chi_{A_{n_k}}(x)\to \chi_{A}(x)$ for all 
$x\in \R^2\setminus E$ and 
$$
\left\|\chi_{A_n}-\chi_A\right\|_{{\rm L}^1}\to 0.
$$

It is easy to see that $\lambda(A)< \infty$, thus 
$A\in \mathfrak{B}'(\R^2)$. If $\lambda(A)=0$ we can take $A$ 
to be $\varnothing$, which belongs to $\hat{\mathcal{C}}$. 
Let us now assume that $\lambda(A)>0$; we prove that
$A$ is almost everywhere equal to a convex compact subset 
of $\R^2$. Let $A'$ be the convex hull of $A$. Then, for 
any $\mathbf{p}\in A'$ there exists $\mathbf{q},
\mathbf{r}\in A$ such that
$$
\mathbf{p}=t\mathbf{q}+(1-t)\mathbf{r}
$$ 
for some $t\in [0,1]$. Since $\mathbf{q},\mathbf{r}\in A$, 
there exists $N\in \N$ such that for all $l> N$
$$
\chi_{A_{n_l}}(\mathbf{q})=\chi_{A_{n_l}}(\mathbf{r})=1\;,
$$ 
that is, $\mathbf{q},\mathbf{r}\in A_{n_l}$ for all $l> N$.
Since $A_n$ is convex for all $n$, this means that 
$\mathbf{p}\in A_{n_l}$ for all $l>N$,
which implies that 
$$
\chi_{A_{n_k}}(x)\to \chi_{A'}(x)
$$ 
almost everywhere in $A'\cup (\R^2\setminus A)=\R^2$.
Hence 
$$
\lambda(A'\setminus A)=0.
$$ 
Now it is sufficient to observe that, since $A'$ is convex, 
its boundary $\partial A'$ has Lebesgue measure zero 
(see \cite{La}), and since $\overline{A'}$
is also convex with $\lambda(\overline{A'})=
\lambda(A)<\infty$, by Lemma  \ref{convex} it is bounded, 
hence compact.
$\quad \square$
\end{proof}

Before investigating what the completion of $(\Dt,d)$ is, we
first prove an auxiliary result, the content of which is 
related to resolving singularities on toric varieties
(see Remark \ref{De} and \cite{Cox}).
Recall that a vector $u\in \Z^2$ is called \emph{primitive} 
if, whenever $u=kv$ for some $k\in \Z$ and $v\in \Z^2$,
then $k=\pm 1$.

\begin{lemma}
\label{lemma:resolving}
Let $\Delta$ be a simple rational polytope  in $\R^2$ that 
fails to be smooth only at one vertex $\mathbf{p}$: the 
primitive vectors $u,v\in\Z^2$ which direct the edges at 
$\mathbf{p}$ do not form a $\Z$-basis of $\Z^2$. Then there 
is a simple rational polytope $\widetilde{\Delta}$ with at 
most $|{\rm det}\left[u\,   v\right]|-1$ more edges than 
$\Delta$ that is a smooth polytope and is equal to $\Delta$ 
except in a neighborhood of $\mathbf{p}$.
\end{lemma}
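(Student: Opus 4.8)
The plan is to reduce to a local problem at the single bad vertex $\mathbf{p}$ and then resolve it by a finite sequence of corner choppings, proceeding by induction on the quantity $\delta:=|\det[u\ v]|$. Since $\Delta$ is simple and rational and fails to be smooth only at $\mathbf{p}$, after applying an element of $\agl(2,\Z)$ I may assume $\mathbf{p}$ is at the origin, that $u=\mathrm{e}_1=(1,0)$, and that $v=(p,q)$ with $q=\delta\ge 2$ and $0<p<q$, $\gcd(p,q)=1$ (the standard normal form for a two-dimensional cone; this uses that $u$ is primitive and an integral change of basis). If $\delta=1$ there is nothing to prove, so assume $\delta\ge 2$.

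The key step is a single corner chopping at $\mathbf{p}$ of a small size $\varepsilon>0$, chosen smaller than the rational lengths of both edges at $\mathbf{p}$ and small enough that the chopped-off region meets no other vertex of $\Delta$. This replaces the vertex $\mathbf{p}$ by two new vertices $\mathbf{p}_1$ on the edge in direction $u$ and $\mathbf{p}_2$ on the edge in direction $v$, joined by a new edge whose inward primitive normal corresponds, in fan language, to the primitive vector $w=u+v=(1+p,q)$ inserted between $u$ and $v$; explicitly, $\widetilde\Delta^{(1)}:=\Delta\cap\{t_1u+t_2v: t_1+t_2\ge\varepsilon\}$ as in the definition of corner chopping. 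The resulting polytope has exactly one more edge than $\Delta$ and is still simple and rational; it is smooth everywhere except possibly at $\mathbf{p}_1$ and $\mathbf{p}_2$. At $\mathbf{p}_1$ the two edge directions are $u=(1,0)$ and $w=(1+p,q)$, with $|\det[u\ w]|=q=\delta$ — no progress — so I instead orient the chopping asymmetrically: I record that at $\mathbf{p}_1$ the directions are $u$ and $-u+w=v$... rather, the clean bookkeeping is the standard one for resolving a lattice cone. The directions of the two edges at the new vertices are $u,w$ and $w,v$; computing $|\det[u\ w]|$ and $|\det[w\ v]|$ shows that each is strictly smaller than $\delta$ precisely when $w$ is chosen to be a lattice point strictly inside the cone spanned by $u,v$ that lies on the boundary of the convex hull of the nonzero lattice points of that cone (the "Newton boundary" of the cone). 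With $w$ so chosen, $|\det[u\ w]|<\delta$ and $|\det[w\ v]|<\delta$, and moreover the sum of these two determinants is at most $\delta$ (in fact the continued-fraction/HCF resolution gives a bound linear in $\delta$).

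From here the induction runs: apply the lemma's inductive hypothesis separately to the two new bad vertices $\mathbf{p}_1,\mathbf{p}_2$, each with determinant strictly less than $\delta$, choosing the chopping sizes small enough that the three local modifications have pairwise disjoint supports and do not reach any previously existing vertex. The total number of extra edges is $1$ (from the first chop) plus the numbers supplied by the two sub-resolutions; tracking the bound gives at most $\delta-1=|\det[u\ v]|-1$ new edges in total, which is exactly the claimed estimate — this can be seen either by induction on $\delta$ using $(\delta_1-1)+(\delta_2-1)+1\le \delta-1$ whenever $\delta_1+\delta_2\le\delta$ and $\delta_1,\delta_2\ge1$, or directly from the length of the Hirzebruch–Jung continued fraction expansion of $\delta/p$. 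Since every operation used is a corner chopping in a neighborhood of the offending vertex, the final polytope $\widetilde\Delta$ agrees with $\Delta$ outside a neighborhood of $\mathbf{p}$, is simple and rational, and is now smooth at every vertex, completing the proof. The main obstacle is the combinatorial bookkeeping in the second paragraph: one must choose the inserted lattice vector $w$ correctly so that \emph{both} resulting determinants genuinely drop, and then verify that the naive sum of extra edges does not exceed $|\det[u\ v]|-1$; this is the place where one invokes the Hirzebruch–Jung / toric resolution-of-singularities computation rather than a one-line argument.
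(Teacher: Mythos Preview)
Your overall strategy is the right one, and is essentially the same as the paper's: this is the Hirzebruch--Jung resolution of a two-dimensional cone singularity, carried out on the polytope side. But as written there is a genuine gap at exactly the point you flag yourself. After observing that the naive corner chop (inserting $w=u+v$) leaves $|\det[u\ w]|=\delta$ unchanged, you never actually specify a replacement $w$ and never prove the inequality $\delta_1+\delta_2\le\delta$ on which your induction rests; you only name the Newton boundary and the continued-fraction expansion and defer to them. Without that step the induction does not get off the ground. (A clean fix, if you want to keep your two-branch induction: by Pick's theorem the closed triangle $\operatorname{conv}(0,u,v)$ contains a lattice point $w\notin\{0,u,v\}$ whenever $\delta\ge 2$; since $u,v$ are primitive, $w$ lies in the open cone, and writing $w=su+tv$ with $s,t>0$, $s+t\le 1$ gives $\delta_1=t\delta$, $\delta_2=s\delta$, both strictly less than $\delta$, with $\delta_1+\delta_2\le\delta$.) There is also a small conflation of edge directions with facet normals in your fan discussion: the ``inserted ray'' $n_1+n_2$ in the normal fan is a sum of facet normals, not of your edge vectors $u,v$.

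The paper avoids the two-branch induction entirely, and this is worth comparing. After an $\operatorname{AGL}(2,\mathbb{Z})$ move it normalizes to $u=(1,0)$ and $v=(\alpha_2,\alpha_1)$ with $0<\alpha_2<\alpha_1=\delta$, then cuts by a \emph{vertical} edge close to $\mathbf{p}$. Of the two new vertices, the one on the $u$-edge has directions $(1,0),(0,1)$ and is automatically smooth; the other has directions $(0,-1),(\alpha_2,\alpha_1)$ with invariant $\alpha_2<\alpha_1$. One then renormalizes and repeats. This produces a strictly decreasing sequence $\alpha_1>\alpha_2>\alpha_3>\cdots$ of positive integers, so the process terminates in at most $\alpha_1-1=\delta-1$ steps, giving the edge bound directly with no branching and no appeal to an external continued-fraction bound. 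In effect the paper always chooses $w$ adjacent to $u$ on the Newton boundary (equivalently $w=(1,1)$ in your coordinates), so that $\delta_1=1$ at every stage; your more symmetric setup is correct in principle but needs the extra argument above to close.
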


\begin{proof} 
Let $u=(a,b)$ and $v=(c,d)$. We claim that there is always a 
matrix $A\in {\rm GL}(2,\Z)$ and $(\alpha_0,\alpha_1)\in\Z^2$, 
a primitive vector, such that 
$$
A \,\left[ \begin{array}{cc}
a & c \\
b & d \end{array} \right]=
\left[ \begin{array}{cc}
1 & \alpha_0 \\
0 & \alpha_1 \end{array} \right].
$$
If we set $\alpha_1=|\text{det}\left[u  \, v\right]|$, the claim is equivalent to being able to solve the following for 
$\alpha_0$:
$$
\left\{\begin{array}{cc}
a \,\alpha_0 \equiv c&\text{ mod }\alpha_1\\
b \,\alpha_0\equiv d&\text{ mod }\alpha_1
\end{array}\right. .
$$
Note also that $\alpha_0\not\equiv 0$ mod $\alpha_1$, for 
otherwise it would contradict that $(\alpha_0,\alpha_1)$ is 
primitive.

The primitive vectors directing the edges of the polygon 
$A(\Delta)$ at the non-smooth vertex $A(\mathbf{p})$ are 
$(1,0)$ and $(\alpha_0,\alpha_1)$. We can additionally do a 
shear transformation via a matrix of the form 
$$
S_1=\left[\begin{array}{cc}
1 & k \\
0 & 1
\end{array}\right]
$$
and obtain a ${\rm GL}(2,\Z)$-equivalent polygon 
$S_1 A (\Delta)$ for which the non-smooth vertex 
$S_1 A (\mathbf{p})$ has edge directing vectors $(1,0)$ 
and $(\alpha_2,\alpha_1)$ where $0<\alpha_2<\alpha_1$.

We now create a new vertical edge on the polygon 
$S_1 A (\Delta)$ as close to the vertex $S_1 A (\mathbf{p})$ 
as desired, thereby eliminating that vertex. Call this new 
polygon $\Delta_1$. Of the two vertices at the endpoints of 
this new edge, one is clearly smooth: the one with edge 
directing vectors $(1,0)$ and $(0,1)$. The other vertex has 
edge directing vectors $(0,-1)$ and $(\alpha_2,\alpha_1)$ and 
is smooth if and only if 
$$
\text{det}\left[\begin{matrix}
0&\alpha_2\\ -1&\alpha_1
\end{matrix} \right]=\alpha_2=1.
$$ 
If this second vertex is smooth, the desired polygon 
$\widetilde{\Delta}$ is 
$$
A^{-1} S_1^{-1}(\Delta_1).
$$ 
Otherwise, the process continues: let $B$ be the matrix
$$B=\left[ \begin{array}{cc}
0 & -1 \\
1 & 0 \end{array} \right].
$$
Then the polygon $B (\Delta_1)$ is smooth except at one 
vertex $\mathbf{p}_1$, which has edge directing vectors 
$B((0,-1))=(1, 0)$ and $B((\alpha_2,\alpha_1))=
(-\alpha_1,\alpha_2)$. As before, we can apply a shear 
transformation to obtain $S_2B(\Delta_1)$ such that the edge 
directing vectors at the non-smooth vertex 
$S_2B(\mathbf{p}_1)$ are of the form $(1,0)$ and 
$(\alpha_3,\alpha_2)$ with $0<\alpha_3<\alpha_2$. Proceeding 
exactly as before, we create a new vertical edge on the 
polygon $S_2B(\Delta_1)$ as close to the vertex 
$S_2B(\mathbf{p}_1)$ as desired, thereby eliminating it. Call 
this new polygon $\Delta_2$. Of the two vertices at the 
endpoints of the new edge, the one with edge directing vectors 
$(0,1)$ and $(1,0)$ is clearly smooth, the other one has edge 
directing vectors $(0,-1)$ and $(\alpha_3,\alpha_2)$ and is 
smooth if and only if $\alpha_3= 1$. If that is so, the 
desired polygon $\widetilde{\Delta}$ is 
$$
A^{-1}S_1^{-1}B^{-1}S_2^{-1}(\Delta_2),
$$ 
otherwise we repeat the process. 

Because $\alpha_1, \alpha_2, \alpha_3, \ldots$ is a strictly 
decreasing sequence of non-negative integers, it will reach 
$1$ in at most $\alpha_1-1$ steps, thus terminating the 
process and producing $\widetilde{\Delta}$ with at most 
$\alpha_1-1$ more edges than $\Delta$. This proves the 
statement in the lemma.
$\quad \square$
\end{proof}

\begin{remark}\label{remark:resolving}
Note that the process described in the proof of Lemma 
\ref{lemma:resolving} does not rely on $\Delta$ being smooth 
at all vertices other than $\mathbf{p}$. In fact, the result 
holds for any simple rational non-smooth polytope $\Delta$, 
with any number of non-smooth vertices, except that the number 
of extra edges will be 
$$
\sum_{\mathbf{p}_i}\left(|\text{det}\left[u_i\,v_i\right] |-1 
\right),
$$ 
where the $\mathbf{p}_i$s are the non-smooth vertices of 
$\Delta$. The new polytope $\widetilde{\Delta}$ is equal to 
$\Delta$ except in neighborhoods of the vertices 
$\mathbf{p}_i$ that can be made as small as desired.
\end{remark}

Now we are ready to prove the main theorem of this section.

\begin{theorem}
\label{prop:completion}
The completion of $(\Dt,d)$ is $(\hat{\mathcal{C}},d)$.
\end{theorem}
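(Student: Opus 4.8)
The goal is to show that $(\hat{\mathcal{C}},d)$ is the completion of $(\Dt,d)$; since we already know from Theorem \ref{C complete} that $(\hat{\mathcal{C}},d)$ is complete, it suffices to prove that $\Dt$ is dense in $\hat{\mathcal{C}}$. The plan is to establish density through the chain of inclusions
$$
(\Dt,d)\subset(\mathcal{P}_\Q,d)\subset(\mathcal{P}_2,d)\subset(\mathcal{C},d)\subset (\hat{\mathcal{C}},d),
$$
showing that each inclusion is dense; composing these gives the result. The last inclusion is dense because any convex compact set of positive measure can be written as the $d$-limit of (say) slightly shrunken convex compact sets, and more simply because $\varnothing$ is approximated by convex compact sets of vanishingly small measure — the only point of $\hat{\mathcal C}$ outside $\mathcal C$ is $\varnothing$ itself.

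First I would treat $\mathcal{P}_2$ dense in $\mathcal{C}$: given a convex compact $K$ with $\lambda(K)>0$, inscribe convex polygons $P_m\subset K$ with vertices on $\partial K$ chosen so that the gaps have small area; since $\lambda(K\Delta P_m)=\lambda(K\setminus P_m)\to 0$ by an elementary approximation-of-convex-bodies argument (e.g.\ take an $\varepsilon$-net on $\partial K$), we get $d(P_m,K)\to 0$. Next, $\mathcal{P}_\Q$ dense in $\mathcal{P}_2$: given a convex polygon $P$, perturb each vertex to a nearby rational point; the resulting polygon $P'$ is still convex (convexity is an open condition on the cyclically ordered vertex list, so small perturbations preserve it) and $d(P,P')$ is controlled by the vertex displacements, hence can be made arbitrarily small. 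Finally, and this is the crux, $\Dt$ dense in $\mathcal{P}_\Q$: given a rational convex polygon $Q$, I first make it \emph{simple} — in dimension two every convex polygon is already simple (exactly two edges meet at each vertex), so no work is needed there — and then I must replace it by a genuinely \emph{smooth} (Delzant) polygon nearby in the $d$-metric. This is exactly where Lemma \ref{lemma:resolving} and Remark \ref{remark:resolving} come in: applying the resolution procedure to all the non-smooth vertices of $Q$ produces a smooth rational polytope $\widetilde{Q}$ that agrees with $Q$ outside arbitrarily small neighborhoods of those vertices, so $\lambda(Q\Delta\widetilde{Q})$ can be made as small as we like, i.e.\ $d(Q,\widetilde Q)\to 0$. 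Thus $\widetilde Q\in\Dt$ approximates $Q$.

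\textbf{Main obstacle.} The genuine content is the last step: going from rational to Delzant without moving the polytope much. The triangle-chopping in the naive picture does \emph{not} obviously stay within a small neighborhood of the bad vertex while controlling the number of new edges, and one must be careful that the corner-resolution of Lemma \ref{lemma:resolving} (a) only modifies $Q$ near $\mathbf{p}$, (b) keeps the result rational and smooth, and (c) does not accidentally destroy smoothness or simplicity at nearby vertices. Remark \ref{remark:resolving} asserts precisely that the construction localizes to arbitrarily small neighborhoods of each non-smooth vertex and can be performed simultaneously at all of them, so the area change is $\sum_{\mathbf p_i}(\text{area of a small neighborhood of }\mathbf p_i)$, which tends to $0$. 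I would write this out carefully, noting that since the neighborhoods are disjoint and can be taken inside balls of radius $\delta$, the total symmetric-difference area is $O(\delta^2 \cdot \#\text{vertices})$. Once density of $\Dt$ in $\hat{\mathcal C}$ is established, completeness of $(\hat{\mathcal C},d)$ from Theorem \ref{C complete} immediately identifies it as the completion of $(\Dt,d)$, which, transported through \eqref{map}, is the completion of $(\Mo,d_\T)$, finishing Theorem \ref{key}(b).
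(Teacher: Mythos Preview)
Your proposal is correct and follows essentially the same route as the paper: both arguments establish density along the chain $\Dt\subset\mathcal{P}_\Q\subset\mathcal{P}_2\subset\mathcal{C}$ and then invoke Theorem~\ref{C complete}, with Lemma~\ref{lemma:resolving} and Remark~\ref{remark:resolving} supplying the key step $\Dt$ dense in $\mathcal{P}_\Q$. The only differences are cosmetic---the paper approximates a convex body by the convex hull of a union of inscribed rectangles rather than by an inscribed polygon with vertices on the boundary, and it approximates a general polygon by a rational one via rational approximation of edge slopes rather than of vertices---but these are equivalent standard maneuvers.
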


\begin{proof} 
Recall the metric space inclusions 
$$
(\Dt,d)\subset(\mathcal{P}_\Q,d)\subset(\mathcal{P}_2,d)
\subset(\mathcal{C},d).
$$
We shall prove that the completion of $(\Dt,d)$ contains 
$(\mathcal{P}_\Q,d)$, that the completion of 
$(\mathcal{P}_\Q,d)$ contains $(\mathcal{P}_2,d)$, and that 
the completion of $(\mathcal{P}_2,d)$ contains 
$(\mathcal{C},d)$. Then by Theorem \ref{C complete} the 
conclusion follows.
\smallskip

\noindent \textbf{A.)} \textit{The completion of $(\Dt,d)$ 
contains $(\mathcal{P}_\Q,d)$.} Because $d$ is a metric on 
$\mathcal{P}_\Q$ that coincides 
with the given metric on $\Dt$, in order to prove that the 
completion of $\Dt$ contains $\mathcal{P}_\Q$ it suffices to 
show that for each $\Delta\in\mathcal{P}_\Q$ there exists a 
polygon in $\Dt$ as close to $\Delta$ as desired, relative to 
the metric $d$. Lemma \ref{lemma:resolving} and, in 
particular Remark \ref{remark:resolving}, guarantee that this 
is so.
\smallskip

\noindent \textbf{B.)} \textit{The completion of
$(\mathcal{P}_\Q,d)$ contains $(\mathcal{P}_2,d)$.}
Because $d$ is a metric on $\mathcal{P}_2$ that coincides with 
the given metric on $\mathcal{P}_\Q$, in order to prove that 
the completion of $(\mathcal{P}_\Q,d)$ contains 
$(\mathcal{P}_2,d)$ it suffices to show that for each 
$\Delta\in\mathcal{P}_2$ there exists a polygon 
$\Delta_\Q\in\mathcal{P}_\Q$ as close to $\Delta$ as desired, 
relative to the metric $d$. This rational polygon $\Delta_\Q$ 
can be obtained by approximating the irrational slopes of the 
edges of $\Delta$ by rational numbers and choosing for 
directing vectors of the edges the corresponding lattice 
vectors, and also by changing the vertex points accordingly. 
This way, we can make the symmetric difference between the 
original polygon $\Delta$ and the rational polygon $\Delta_\Q$ 
be contained in a $\varepsilon$-ball of the edges of $\Delta$, 
the area of which can be made as small as needed by making $
\varepsilon$ small enough.
\smallskip

\noindent \textbf{C.)} \textit{The completion of 
$(\mathcal{P}_2,d)$ contains $(\mathcal{C},d)$.}
Because $d$ is a metric on $\mathcal{C}$ that coincides with 
the given metric on $\mathcal{P}_2$, in order to prove that 
the completion of $(\mathcal{P}_2,d)$ contains 
$(\mathcal{C},d)$ it suffices to show that for each 
$C\in\mathcal{C}$ there exists a polygon 
$\Delta_2\in\mathcal{P}_2$ as close to $C$ as desired, 
relative to the metric $d$. In order to do this, observe that 
given a compact convex set $C\in \mathcal{C}$ and 
$\varepsilon >0$, there exists a collection of disjoint 
rectangles $\{[a_i,b_i)\times [c_i,d_i)\}_{i=1}^N$ contained 
in $C$ such that 
$$
\left\|\chi_C-\sum_{i=1}^N\chi_{[a_i,b_i)\times [c_i,d_i)}
\right\|_{
{\rm L}^1}<\varepsilon.
$$
Let $\Delta_2$ be the convex hull of 
$\bigcup_{i=1}^N[a_i,b_i]\times [c_i,d_i]$.
Since $C$ is convex, we have 
$$
\bigcup_{i=1}^n [a_i,b_i)\times [c_i,d_i)\subset \Delta_2\subset C,
$$ 
and hence 
$$
\left\|\chi_{C}-\chi_{\Delta_2}\right\|_{{\rm L}^1}<\varepsilon,
$$ 
which proves the claim.
$\quad \square$
\end{proof}

\begin{prop}
The space $(\Dt,d)$ is not locally compact.
\end{prop}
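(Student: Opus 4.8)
The plan is to combine the density of $\Dt$ in its completion with the completeness of that completion. By Theorem \ref{prop:completion}, $(\Dt,d)$ sits as a dense subspace of the complete metric space $(\hat{\mathcal{C}},d)$. The high-level reason for non-local-compactness is the classical fact that a locally compact dense subspace of a Hausdorff space must be open in it, together with the observation that $\Dt$ is \emph{not} open in $\hat{\mathcal{C}}$, since every Delzant polygon is a $d$-limit of compact convex sets which are not polytopes. I will, however, argue directly, showing that no $d$-ball around a Delzant polygon is compact.

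First I would fix $\Delta\in\Dt$, a vertex $\mathbf{p}$ of $\Delta$, and $\varepsilon>0$, and construct a compact convex set $C_\delta$ by replacing $\Delta$ near $\mathbf{p}$ (inside a disk $B(\mathbf{p},\delta)$) with a strictly convex circular arc lying in $\Delta$. Then $C_\delta$ has a genuinely curved boundary arc, so it is not a polytope, hence $C_\delta\in\hat{\mathcal{C}}\smallsetminus\Dt$; moreover $\Delta$ and $C_\delta$ differ only inside $B(\mathbf{p},\delta)$, so the area of their symmetric difference is at most $\pi\delta^2$ and $d(C_\delta,\Delta)\to 0$ as $\delta\to 0$. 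I fix $\delta$ with $d(C_\delta,\Delta)<\varepsilon/2$. Now, by density (Theorem \ref{prop:completion}), there is a sequence $\Delta_n\in\Dt$ with $d(\Delta_n,C_\delta)\to 0$; after discarding finitely many terms we may assume $d(\Delta_n,C_\delta)<\varepsilon/2$, so $d(\Delta_n,\Delta)<\varepsilon$ for all $n$, i.e.\ every $\Delta_n$ lies in the closed $\varepsilon$-ball around $\Delta$ in $\Dt$. Being $d$-convergent in $\hat{\mathcal{C}}$, the sequence $\{\Delta_n\}$ is Cauchy.

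Next I would rule out sequential compactness of that closed ball: if $\{\Delta_n\}$ had a subsequence converging in $\Dt$, then, since a Cauchy sequence with a convergent subsequence converges, the whole sequence would converge in $\Dt$; but limits in the metric space $\hat{\mathcal{C}}$ are unique, so that limit would have to be $C_\delta\notin\Dt$, a contradiction. Hence the closed $\varepsilon$-ball around $\Delta$ in $\Dt$ is not sequentially compact, and being metric it is not compact. Finally, let $N$ be any neighborhood of $\Delta$ in $\Dt$; it contains $B(\Delta,r)$ for some $r>0$, and taking $\varepsilon=r/2$ in the construction above exhibits a non-compact subset of $\Dt$ that is contained in $N$ and closed in $\Dt$, hence closed in $N$; since closed subspaces of compact spaces are compact, $N$ is not compact. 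As $\Delta$ was arbitrary, $(\Dt,d)$ is not locally compact, indeed at no point.

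The argument is essentially soft point-set topology once the geometric input is in place; the only step needing a moment's thought — and the one I regard as the crux — is producing, arbitrarily close to a prescribed Delzant polygon, a compact convex set that is not a Delzant polytope. Rounding a corner does this transparently, but alternatives are available: one may instead make a small \emph{rational} non-smooth modification near a vertex $\mathbf{p}$ (cutting along a line through $\mathbf{p}+\delta u_1$ and $\mathbf{p}+2\delta u_2$, where $u_1,u_2$ direct the edges at $\mathbf{p}$, creates a new vertex whose edge-direction determinant is $\pm 2$, so the resulting polygon lies in $\mathcal{P}_{\Q}\smallsetminus\Dt$), or one may exhibit a fully explicit Cauchy sequence of Delzant polygons obtained from $\Delta$ by iterated corner choppings of rapidly shrinking sizes whose new edges accumulate on a non-degenerate convex arc, so that the limit set again escapes $\Dt$. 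In all cases the remainder of the argument is exactly as above.
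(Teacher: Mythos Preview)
Your argument is correct and follows the same strategy as the paper's: produce a non-Delzant point of the completion inside a prescribed ball, invoke density of $\Dt$ in its completion (Theorem \ref{prop:completion}) to obtain a Cauchy sequence of Delzant polygons converging to it, and conclude that the closed ball is not (sequentially) compact. The only cosmetic differences are that the paper works at the single polygon $H_{1,1,0}$ and takes as the non-Delzant target a nearby polygon $Q_\delta\in\mathcal{P}_2\smallsetminus\Dt$ with an edge of irrational slope, whereas you work at an arbitrary $\Delta$ and round a corner to obtain a strictly convex target in $\hat{\mathcal{C}}\smallsetminus\Dt$; your version thus shows failure of local compactness at every point, a marginally stronger statement.
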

\begin{proof}
To prove that $(\Dt,d)$ is not locally compact, we show that
the closure of any open ball $B_{\varepsilon}(H_{1,1,0})
\subset \Dt$ is not compact in $(\Dt,d)$.  For each fixed 
$\varepsilon$, let $\frac{\varepsilon}{2}< \delta< 
\varepsilon$ be an irrational number, and
let $Q_\delta$ be the polygon in Figure \ref{irr}. Note that 
$Q_\delta\in\mathcal{P}_2\setminus \Dt$.
By a triangle inequality argument it is easy to see that 
$$
B_{\frac{\varepsilon}{2}}(Q_\delta)\subset B_{\varepsilon}
(H_{1,1,0}).
$$
Because $(\Dt,d)$ is dense in $(\mathcal{P}_2,d)$ (see  the 
proof of Theorem \ref{prop:completion}), there exists a 
sequence of Delzant polygons
$$
\{A_n\}_{n\in \mathbb{N}}\subset 
B_{\frac{\varepsilon}{2}}(Q_\delta)
$$ 
that converges to $Q_\delta$ in $(\mathcal{P}_2,d)$.
Thus any subsequence of $\{A_n\}$ also converges to $Q_\delta$ 
in $(\mathcal{P}_2,d)$, and hence does not
converge in $(\Dt,d)$. This proves that the closure of 
$B_\varepsilon(H_{1,1,0})$ in $\Dt$ is not compact.
$\quad \square$
\end{proof}

\begin{figure}[h!]
\begin{center}
\epsfxsize=\textwidth
\leavevmode
\psfrag{d}{$\delta$}
\psfrag{1}{$1$}
\includegraphics[width=1.5in]{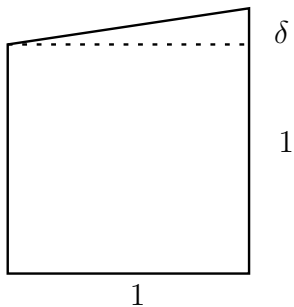}
\end{center}
\caption{The polygon $Q_\delta$.}
\label{irr}
\end{figure}

\begin{remark}\label{Hausdorff}
There is another metric commonly used on $\mathcal{C}$, namely 
the \emph{Hausdorff metric} $d_{\rm H}$ (see
\cite{BuBuIv2001}). Given two elements $A,B\in \mathcal{C}$,
we define
$$
d_{\rm H}(A,B):=\max\{\sup_{y\in B}\inf_{x\in A}\|x-y\|,\,\,\, 
\sup_{x\in A}\inf_{y\in B}\|y-x\|\}\,.
$$
As proved in \cite{ShWe}, the metrics $d$ and $d_H$ are 
equivalent on $\mathcal{C}$, and consequently all the 
topological properties proved for $(\mathcal{C},d)$ and 
$(\Dt,d)$ also hold for $(\mathcal{C},d_H)$ and $(\Dt,d_H)$. 
We chose to work with $d$ because on $\Dt$ this is related to
the Duistermaat-Heckman measure (see Remark \ref{DH}).
\end{remark}

\begin{remark}
Other moduli spaces of polygons have been studied, for 
example, in \cite{HK1,HK2} by Hausmann and Knutson and in 
\cite{KM} by Kapovich and Millson. The former focuses on the 
space of polygons in $\R^k$ with a fixed number of edges up to 
translations and positive homotheties, whereas the latter
studies the space of polygons in $\R^2$ with fixed side 
lengths up to orientation preserving isometries. 
However these different contexts completely change the flavor 
of the topological problem. 
\end{remark}

\section{Further problems} \label{sec:problems}

Theorem \ref{key} leads to further questions (not directly 
related among themselves).

\begin{problem}[Other moduli spaces]
First of all, we recall that symplectic toric manifolds
are always K\"ahler (see \cite{De1988,Gui}).
Besides the spaces we introduced, it would be
interesting to study the topological properties of the 
following:
\begin{itemize}
\item[(a)] $\Mo/\simeq$,
where $\simeq$ corresponds to rescaling the symplectic form, 
with the quotient topology. This corresponds to
considering the space $\Dt/{\rm PSL}(n;\Z)$.
\item[(b)] $\Mo/\approx$, where we also identify 
the K\"ahler manifolds
$(M,\omega)$ 
and $(M,\omega')$ if the
cohomology classes of their K\"ahler forms live in the same
connected component of the K\"ahler cone.
\end{itemize}
\end{problem}

\begin{problem}[Completeness at manifold level] 
\normalfont
This question attempts to make more explicit the relation 
between the completion at the level of polytopes with the 
completion at the
level of manifolds given in Theorem \ref{key}.
View $\Mo$ as a subset of the set of (all) integrable systems 
on symplectic $4$\--manifolds
$$
\mathcal{F}:=\{(M,\omega,F)\mid F:=(f_1,f_2) \colon M \to 
\mathbb{R}^2\}.
$$ 
The map $ v \colon \mathcal{F} \to  \mathfrak{B}'(\R^2)$ 
given by 
$$
v(M,\omega,F):=F(M)
$$ 
extends (\ref{map}). What can one say about the intersection 
$$
 V:= \hat{\mathcal{C}} \cap v(\mathcal{F})?
$$
Even though in this question $F$ is assumed smooth (because 
integrable systems $F \colon M\to \R^2$ are usually required 
to be everywhere smooth), the case when $F$ is just continuous 
on $M$ and smooth on an open dense subset of $M$ is also 
interesting. Also, it would be interesting to investigate 
under which conditions $F$ integrates to a 
$\mathbb{T}^2$-action on (an open dense subset of) $M$. 
(See, for example, \cite[Proposition 2.12]{vungoc}.)

A different but related approach to the same problem is to 
enlarge the category of objects by relaxing the smoothness 
condition on the polytope.  For example, from the work of
Lerman and Tolman in \cite{LeTo}, we know that any rational 
convex polytope is the momentum map image of a symplectic 
toric orbifold. However we do not know of a similar 
identification for generic convex compact subsets of $\R^2$. 
\end{problem}

\begin{problem}[Higher dimensional moduli spaces]
\normalfont
This paper addresses the case $2n=4$ of Problem 2.42 in 
{\rm \cite{PeVN2012}}.  We do not have results 
on the higher dimensional case $2n \ge 6$. 
\end{problem}

\begin{problem}[Continuity of packing density function] 
\label{p30}
\normalfont
Consider the maximal density function 
$$
\Omega \colon \Mo/\simeq \;\to (0,1]
$$
 which assigns to a symplectic toric
manifold its maximal density by equivariantly embedded 
symplectic balls of varying radii 
(see {\rm \cite[Definition 2.4]{PeSc2008}}).  The function 
is most interesting when considered on $\Mo/\simeq$,
where $\simeq$ corresponds to rescaling the symplectic 
form (since rescaling the symplectic form rescales the 
polytope and does not change the density).

This paper gives a quotient topology on $\Mo/\simeq$.
With respect to this topology, where is $\Omega$ continuous? 
$\Omega$ is an interesting map even if one disregards 
topology, for instance the fiber over $1$ consists of
2 points, $\mathbb{CP}^2$, and $\mathbb{CP}^1 \times 
\mathbb{CP}^1$ (proven in {\rm \cite[Theorem 1.7]{Pe2006}}) 
with scaled symplectic forms, but for any other $x \in (0,1)$ 
the fiber is uncountable 
{\rm \cite[Theorems 1.2, 1.3]{PeSc2008}}.
\end{problem}

\begin{problem}[Toric actions and symplectic forms]
\normalfont
It would be interesting to define a  torus action on the 
moduli spaces $\Mo$ or $\Mt$. Similarly for a symplectic form 
(e.g., {\rm \cite{CoKePe2012}}). 

If one has both a torus action and a symplectic form, then 
one can formulate a notion of Hamiltonian action and
momentum map, and study connectivity and convexity properties  
of the image (see for instance {\rm \cite{HaHoJeMa2006}}).
\end{problem}

\begin{problem}[Topological invariants]
\normalfont
Compute the topological invariants (fundamental group, higher 
homotopy groups, cohomology groups, etc.) of the 
path-connected space $\Mt$. 

Some preliminary questions in this direction
are: 
\begin{itemize}
\item[(a)]
find non\--trivial loop classes in $\pi_1(\Mt)$;
\item[(b)]
find non\--trivial cohomology classes in 
${\rm H}^1(\Mt,\mathbb{Z})$.
\end{itemize}
In view of the constructions of this paper, one should be 
able to compute these classes with the aid of the concrete 
description of the polytope space. 

This problem is a particular case of 
\cite[Problem 2.46]{PeVN2012}.
\end{problem}

\section{Appendix: Polytopes} \label{sec:appendix}

Let $V$ be a finite dimensional real vector space. A 
\textit{convex polytope} $S$ in $V$ is the closed
convex hull of a finite set $\{v_1, \ldots, v_n\}$, i.e., 
the  smallest convex set containing $S$ or, equivalently,
$$
\operatorname{Conv}\{v_1, \ldots, v_n\} : = 
\left\{\sum_{i=1}^n a_i v_i\;\left|\; a_i \in [0,1],
\;\sum_{i=1}^n a_i = 1 \right\}\right..
$$
The \textit{dimension} of $\operatorname{Conv}\{v_1, \ldots, 
v_n\}$ is the dimension of the vector space 
$\operatorname{span}_ \mathbb{R}\{v_1, \ldots, v_n\}$. A 
polytope is \textit{full dimensional} if its dimension equals 
the dimension of $V$.

Note that the definition implies that a convex polytope is a 
compact subset of $V$. An \textit{extreme point} of a convex 
subset $C \subseteq V$ is a point of $C$ which does not lie 
in any open line segment joining two points of $C$. Thus, a 
convex polytope is the closed convex hull of its extreme 
points (by the Krein\--Milman Theorem \cite{KeMi1940})
called \textit{vertices}. In particular, the set of vertices 
is contained in $\{v_1, \ldots, v_n\}$. Clearly, there are 
infinitely many descriptions of the same polytope as a closed 
convex hull of a finite set of points. However, the 
description of a polytope as the convex hull of its
vertices is minimal and unique.

There is another description of convex polytopes in terms of  
intersections of half-spaces. Let $V^\ast$ be the dual of $V$ 
and denote by $\left\langle\,, \right\rangle:V ^\ast\times V
\rightarrow\mathbb{R}$ the natural non-degenerate duality 
pairing. The \textit{positive (negative) half-space} defined 
by $\alpha\in V^\ast$ and $a \in \mathbb{R}$ is defined by
$$ 
V_{\alpha, a}^{\pm}: =\left\{v\in V \,\left|\,\left\langle
\alpha,v\right\rangle\gtreqless a\right\}\right.. 
$$
Traditionally, in the theory of convex polytopes, the half 
spaces are chosen to be of the form $V_{\alpha, a}^{-}$. With 
these definitions, a convex polytope is given as a finite 
intersection of half-spaces. As for the convex hull 
representation, there are infinitely many representations of 
the same convex polytope as a finite intersection of 
half-spaces, but unlike it, a distinguished one that is 
minimal exists only for full dimensional polytopes, 
 we will describe it in the next paragraph.

A \textit{face} of a convex polytope is an intersection with 
a half-space satisfying the following condition: the boundary 
of the half-space does not contain any interior point of the 
polytope. Thus the faces of a convex polytope are themselves 
polytopes (and hence compact sets).  Let $m$ be the dimension
of a convex polytope. Then the whole polytope is 
the unique $m$-dimensional face, or \textit{body}, the
$(m-1)$-dimensional faces are called \textit{facets}, the 
$1$-dimensional faces are the \textit{edges}, and the 
$0$-dimensional faces are the vertices of the polytope. If 
the convex polytope is full-dimensional, its minimal 
and unique description as an intersection of half-spaces is 
given when the boundary of those half-spaces contain 
the facets.

\bibliographystyle{new}

\begin{thebibliography}{300}
  
  \bibitem{AuCaLe}
  M. Audin, A. Cannas da Silva, E. Lerman:
  \emph{Symplectic geometry of integrable systems},
   Advanced Courses in Mathematics. CRM Barcelona. Birkh\"auser Verlag, Basel (2003).
 
\bibitem{At1982} 
M. Atiyah: Convexity and commuting Hamiltonians, \emph{Bull. 
London Math. Soc.} {\bf 14}, 1--15 (1982). 

\bibitem{BuBuIv2001} 
D. Burago, Y.  Burago, and S. Ivanov: 
\emph{A Course in Metric Geometry}, 
Graduate Studies in Mathematics, 33. American Mathematical 
Society, Providence, RI (2001). 

\bibitem{anacannas} 
A.\ Cannas da Silva, \emph{Lectures on Symplectic Geometry}, 
Lecture Notes in Mathematics 1764, Corrected 2nd Printing, 
Springer (2008).

\bibitem{Ce1937}  
E. \v{C}ech: On bicompact spaces, 
\emph{Ann. of Math.} (2) {\bf 38}, 823--844 (1937).

\bibitem{CoKePe2012} 
J .Coffey, L. Kessler, and \'A. Pelayo: 
Symplectic geometry on moduli spaces of $J$-holomorphic 
curves, \emph{Ann. Glob. Anal. Geom.} {\bf 41}, 265--280 
(2012).

\bibitem{Cox} 
D. Cox: Toric varieties and toric resolutions, in 
\emph{Resolution of Singularities}, Progress in Math. 
\textbf{181}, Birkh\"auser, Basel Boston Berlin, 259--284 (2000).

\bibitem{De1988} 
T. Delzant: Hamiltoniens p{\'e}riodiques et images convexes 
de l'application moment, {\em Bull. Soc. Math. France} 
{\bf 116}, 315--339 (1988).

\bibitem{DuHe1982}
J.J. Duistermaat and G.J. Heckman: On the variation in the 
cohomology of the symplectic form of the reduced phase space,  
\emph{Invent. Math.}  {\bf 69}, 259--268 (1982). 

\bibitem{DuPe2009} 
J.J. Duistermaat and {\'A}. Pelayo: 
Reduced phase space and toric variety coordinatizations of 
Delzant spaces, {\em Math. Proc. Cambr. Phil. Soc.} {\bf 146}, 
695--718 (2009).

\bibitem{Fu1993} 
W. Fulton:  
\emph{Introduction to Toric Varieties},  Annals of 
Mathematical Studies, 131. The William H. Roever Lectures in 
Geometry. Princeton University Press, Princeton, NJ (1993).

\bibitem{Gui}
V. Guillemin: Kaehler structures on toric varieties, \textit{J. Differential
Geometry} {\bf 40}, 285--309 (1994).

\bibitem{GuSt1982} 
V. Guillemin and S. Sternberg: Convexity
properties of the moment mapping, \textit{Invent. Math.} 
{\bf 67}, 491--513 (1982).
    
\bibitem{HaHoJeMa2006}
M. Harada, T. S. Holm, L.C. Jeffrey, and A-L Mare: 
Connectivity properties of moment maps on based loop groups, 
\emph{Geom. Topol.}  {\bf 10}, 1607--1634 (2006).  
   
\bibitem{HK1} 
J.-C. Hausmann and A. Knutson: Polygon spaces 
and Grassmannian,  \emph{L'Enseign. Math.} \textbf{43}, 
173--198 (1997).

\bibitem{HK2} 
J.-C. Hausmann and A. Knutson: The cohomology ring of polygon 
spaces. \emph{Ann. l'Inst. Fourier} \textbf{48}, 281--321 
(1998).
 
\bibitem{KM} 
M. Kapovich and J. Millson: On the moduli space of polygons in 
the Euclidean plane, \emph{J. Diff. Geom.} \textbf{42} No.1, 
133--164 (1995).   

\bibitem{KKP} 
Y. Karshon, L. Kessler, and Pinsonnault:  A compact symplectic 
four-manifold admits only finitely many inequivalent toric 
actions,  \textit{J. Symplectic Geom.} \textbf{5}, 139--166 
(2007).

\bibitem{KeMi1940}
M. Krein and D. Milman: On extreme points of regular convex 
sets,  \emph{Studia Math.}  {\bf 9}, 133--138 (1940).

\bibitem{La} 
R. Lang: A note on the measurability of convex sets, 
\emph{Arch. Math.} \textbf{47}, 90--92 (1986).

\bibitem{LeTo} 
E. Lerman and S. Tolman: Hamiltonian torus actions on 
symplectic orbifolds and toric varieties. \emph{Trans. of the 
Amer. Math. Soc. }\textbf{349}, 4201--4230 (1997).

\bibitem{Pe2006}
\'A. Pelayo: Toric symplectic ball packing,  \emph{Topology 
Appl.} {\bf 153},  3633--3644 (2006).

\bibitem{PeSc2008}
\'A. Pelayo and B. Schmidt: Maximal ball packings of 
symplectic-toric manifolds, 
\emph{Int. Math. Res. Not.}, ID rnm139, 24 pp (2008).

\bibitem{PeVN2009} 
{\'A}. Pelayo and S.  V\~u Ng\d oc:
Semitoric integrable systems on symplectic $4$-manifolds.
\emph{Invent. Math.} {\bf 177}, 571--597 (2009).

\bibitem{PeVN2011} 
{\'A}. Pelayo and S.  V\~u Ng\d oc:
Constructing integrable systems of semitoric type.
\emph{Acta Math.} {\bf 206}, 93--125 (2011). 
     

\bibitem{PeVN2012} 
\'A. Pelayo and  S. V\~u Ng\d oc: 
First steps in symplectic and spectral theory of integrable 
systems,
\emph{Discrete and Cont. Dyn. Syst., Series A} {\bf 32}, 
3325--3377 (2012).

\bibitem{PeRaVN2012}
\'A. Pelayo, T.S. Ratiu, and  S. V\~u Ng\d oc:
Symplectic bifurcation theory for integrable systems, 
\emph{arXiv:1108.0328}. 


\bibitem{ShWe}  
G.C. Shephard and R.J. Webster: Metrics for sets of convex bodies, 
\emph{Mathematika} \textbf{12}, 73--88 (1965).
   
\bibitem{St1937}
M.H. Stone: Applications of the theory of Boolean rings to 
general topology, \emph{Trans. Amer. Math. Soc.} {\bf 41}, 
375--481 (1937).
   
\bibitem{vungoc}
S. V\~u Ng\d oc: Moment polytopes for symplectic manifolds 
with monodromy, \emph{Adv. Math.}, {\bf 208}, 909--934 (2007).
   
   
\end{thebibliography}
\addcontentsline{toc}{section}{References}

\end{document}